\documentclass[a4paper,12pt]{amsart}
\usepackage[T1]{fontenc}
\usepackage[centering,margin=2.3cm]{geometry}
\usepackage[latin1]{inputenc}
\usepackage{amsthm, amsmath}   % For Latex2e
\usepackage{latexsym,amssymb}
\usepackage{algorithmic}
\usepackage{amssymb}
\usepackage{graphicx,subfigure}
\usepackage{times}
\usepackage{enumerate}
\usepackage[usenames,dvipsnames]{pstricks}
\usepackage{epsfig}
\usepackage{pst-node}
\usepackage{pst-grad} % For gradients
\usepackage{pst-plot} % For axes
\usepackage{pst-3dplot}

\hyphenation{res-pec-tively}

\newcommand{\N}{\mathbb{N}}

\newcommand{\Z}{\mathbb{Z}}

\newcommand{\mS}{\mathcal{S}}

\theoremstyle{plain}
\newtheorem{theorem}{Theorem}[section]
\newtheorem{lemma}[theorem]{Lemma}
\newtheorem{corollary}[theorem]{Corollary}
\newtheorem{proposition}[theorem]{Proposition}

\theoremstyle{definition}

\newtheorem{example}[theorem]{Example}

\newtheorem{remark}[theorem]{Remark}

\title[Numerical Semigroups II: Pseudo-Symmetric AA-Semigroups]{Numerical Semigroups II:\\Pseudo-Symmetric AA-Semigroups}
\author[I. Garc\'{i}a-Marco]{Ignacio Garc\'{i}a-Marco\,*}
\address{LIP, ENS Lyon - CNRS -UCBL - INRIA, Universit\'e de Lyon UMR 5668, Lyon, France}
\thanks{* Corresponding Author: Phone: +33-624564368. Email: ignacio.garcia-marco@ens-lyon.fr,\, iggarcia@ull.es \\ \indent 
The first author was supported by the Ministerio de
Econom\'ia y Competitividad,  Spain (MTM2013-40775-P)}
\email{ignacio.garcia-marco@ens-lyon.fr, iggarcia@ull.es}
\author[J.L. Ram\'irez Alfons\'in]{Jorge L. Ram\'irez Alfons\'in}
\address{Universit\'e de Montpellier, Institut Montpelli\'erain Alexander Grothendieck, Case Courrier 051, Place Eug\`ene Bataillon, 34095 Montpellier Cedex 05, France}
\email{jramrez@univ-montp2.fr}
\author[\O. J. R\o dseth]{\O ystein J. R\o dseth}
\address{Department of Mathematics, University of Bergen, Johs.
  Brunsgt. 12, N-5008 Bergen, Norway} 
\email{rodseth@math.uib.no}

\keywords{Numerical semigroup, Ap\'{e}ry set, Frobenius number, Cohen-Macaulay type, genus, pseudo-symmetry}

\subjclass[2010]{05A15, 13P10, 20M14}
%\date{September, 2015}

\begin{document}

\begin{abstract}
This paper is a continuation of the paper ``Numerical Semigroups: Ap\'{e}ry Sets and Hilbert Series" \cite{j&o}. 
We consider the general numerical AA-semigroup, i.e., semigroups consisting of all non-negative integer linear combinations
of relatively prime positive integers of the form $a,a+d,a+2d,\dots,a+kd,c$. We first prove that, in contrast to arbitrary numerical semigroups, there exists an upper bound for the {\em type} of AA-semigroups that only depends on the number
of generators of the semigroup. We then present two characterizations of {\em pseudo-symmetric} AA-semigroups. The first one leads to a polynomial time algorithm to decide whether an AA-semigroup is pseudo-symmetric. The second one gives a method to construct pseudo-symmetric AA-semigroups and provides explicit families of pseudo-symmetric semigroups with arbitrarily large number of generators. 
\end{abstract}

\maketitle

\section{Introduction}\label{introduction}

This paper is a continuation of the paper \cite{j&o}. For a numerical semigroup $\mS$, we recall that 
the \textit{Frobenius number} $g=g(\mS)$ is the largest integer not in $\mS$, 
and the \textit{genus} $N=N(\mS)$ is the number of non-negative integers not in $\mS$. 
The semigroup $\mS$ is {\it symmetric} if 
\[
\mS\cup(g-\mS)=\Z,
\]   
where $g-\mS=\{g-s\mid s\in \mS\}$. The semigroup is \textit{pseudo-symmetric} if the Frobenius number $g$ is \textit{even} and 
\begin{equation} \label{gS}
\mS\cup(g-\mS)=\Z\setminus\{g/2\}.
\end{equation}
It is well known that $\mS$ is symmetric if and only if $g=2N-1$. Similarly, it is also known that $\mS$ is pseudo-symmetric if and only if
\begin{equation} \label{g2N}
g=2N-2.
\end{equation}
We include an easy proof of this result in Section \ref{sec:ap} (Lemma \ref{pslemma}).

Set $\Delta=2N-1-g$. Then $\mS$ is symmetric if and only if $\Delta=0$, and pseudo-symmetric if and only if $\Delta=1$. The numerical semigroup $\mS$ is \textit{irreducible} if it is 
not the intersection of two strictly larger numerical semigroups. It now follows from \cite{bdf, fgh,r&b} that $\mS$ is irreducible if and only if $\Delta\leq1$.  

For a semigroup $\mS$ we set $\mS' = \{x \notin \mS \, \vert \, x+s \in \mS$ for all $s \in \mS\}.$ The elements
of $\mS'$ are usually called {\it pseudo-Frobenius numbers} and the number of elements
of $\mS'$ is called the {\it type} of $\mS$ and denoted by ${\rm type}(\mS)$. We notice that $g$ is always a pseudo-Frobenius number. 
Moreover, by \cite[Proposition 2]{fgh}
$\mS$ is symmetric if and only if $\mS' = \{g\}$, or equivalently, if the type of $\mS$ is $1$. Also, $\mS$ is pseudo-symmetric if and only if
$\mS' = \{g, g/2\}$, which implies that every pseudo-symmetric semigroup has type $2$.

The {\it Ap\'ery set} of $\mS$ with respect to $m \in \mS$ is defined as
$${\rm Ap}(\mS;m) = \{s \in \mS\, \vert \, s-m \notin \mS\}.$$ 
An Ap\'ery set of a semigroup $\mS$ is very difficult to determine in general. This set contains many
relevant information about the semigroup. As we shall point out, in Section \ref{sec:ap}, all the above mentioned parameters related to $\mS$
can be expressed in terms of an Ap\'ery set.

In this paper we focus our attention on numerical AA-semigroup consisting of all non-negative integer linear combinations of relatively prime positive integers $a,a+d,a+2d,\dots,a+kd,c$, where also $a,d,k,c$ are positive integers. 
In \cite{rod2}, R\o dseth presented ``semi-explicit" formulas for ${\rm Ap}(\mS; a)$, $g(\mS)$ and $N(\mS)$ when $\mS$ is  an AA-semigroup. 
%and were also presented in \cite[Section~4.1]{j&o}. 
\smallskip

This self-contained paper is organized as follows. In next section, we review some basic results on Ap\'ery sets and the type of numerical semigroups. We also present  
a useful characterization on the pseudo-symmetry of semigroups (Lemma \ref{pslemma}).
\smallskip

In Section \ref{sec:aap}, after recalling some basic notions and results on AA-semigroup given in \cite{rod2} needed for the rest of the paper, we prove  that the type of an AA-semigroup is at most $2k$ (Theorem \ref{cmtype}). Since  every three generated numerical semigroup is an AA-semigroup (with $k=1$), then Theorem \ref{cmtype} generalizes a result due to  Fr\"{o}berg, Gottlieb and H\"{a}ggkvist \cite[Theorem 11]{fgh} stating that a 3 generated numerical semigroup has type at most $2$.
\smallskip

In Section \ref{sec:char}, we present two
characterizations for pseudo-symmetric AA-semigroups. The first one (Theorem \ref{pseudo}) provides a criterion 
to decide when an AA-semigroup is irreducible. As  a consequence of this result and \cite[Theorem 6]{j&o}, we obtain a polynomial time algorithm to decide whether an AA-semigroup is pseudo-symmetric. Also, by combining the characterization of symmetric AA-semigroups given in \cite[Theorem 5]{j&o} together with Theorem \ref{pseudo} we obtain a
complete characterization of irreducible AA-semigroups. 
The second characterization of pseudo-symmetric AA-semigroups  (Theorem \ref{construccion}) shows how to construct any pseudo-symmetric AA-semigroup. In particular, it provides explicit families of pseudo-symmetric semigroups with arbitrarily large number of generators. 
These results extend those given by Rosales and Garc\'ia-Sanchez in \cite{r&g} who characterized three generated pseudo-symmetric numerical semigroups.

\section{The Ap\'{e}ry set and the type of a numerical semigroup} \label{sec:ap}

Ap\'ery sets are in general very difficult to describe and calculate. Nevertheless, in the few cases where one knows explicitly this set, it
 provides a lot of interesting information about the numerical semigroup.  In this section we
recall some nice properties of the Ap\'ery set and use them to prove some new results, all these properties will be useful in the sequel.

Firstly, it is well known that the Frobenius number of $\mS$ can be computed as
\begin{equation} \label{frob-apery} g(\mS)=\max {\rm Ap}(\mS;m)-m, \end{equation}
and also 
\begin{equation} \label{formul-N}
N(\mS)=\frac{1}{m}\sum\limits_{w\in Ap(\mS;m)}w-\frac{1}{2}(m-1),
\end{equation}
(a result essentially due to Selmer \cite{sel}).

Let $a_1,\ldots,a_n$ be relatively prime positive integers, and let $e$ be a positive integer prime to $a_1$. Put $\mS^e=\langle a_1,ea_2,\ldots,ea_n\rangle$.
By  \cite[Section 2]{j&o}, we have that the map $\N \longrightarrow \N$; $x \mapsto ex$ induces a bijection  ${\rm Ap}(\mS;a_1) \longrightarrow {\rm Ap}(\mS^e; a_1)$ and, hence, 
\begin{equation}\label{apery} e {\rm Ap}(\mS;m) = {\rm Ap}(\mS^e; m).\end{equation}
From this equality, it was deduced in  \cite[(9) and (10)]{j&o} that 
\[
 g(\mS^e)=eg(\mS)+a_1(e-1),\qquad N(\mS^e)=eN(\mS)+\frac{1}{2}(a_1-1)(e-1),
\]
and so, 
\begin{equation}\label{deltasym}
 \Delta(\mS^e)=2eN(\mS)+ a_1(e-1)-1-eg(\mS)-a_1(e-1)=2eN(\mS)-eg(\mS)-e=e\Delta(\mS).
\end{equation}
Thus if $\mS^e$ is symmetric for some admissible $e$, then $\mS^e$ is symmetric for all admissible numbers $e$. Moreover, we have that $\mS$ is pseudo-symmetric if and only if $\Delta(\mS^e)=e$. In particular, if $\mS^e$ is pseudo-symmetric, then $e=1$. Therefore, if $\mS=\langle a,a+d,\ldots,a+kd,c\rangle$, it is no restriction to assume $\gcd(a,d)=1$ when
studying pseudo-symmetry.

One can also determine the set $\mS'$ of pseudo-Frobenius numbers in terms of the Ap\'ery set. Indeed, if we consider $\leq_{\mS}$ the partial order in $\Z$
given by $x \leq_{\mS} y \Longleftrightarrow y-x \in \mS$, then, by \cite[Proposition 7]{fgh}, $s \in \mS$ is a maximal element in ${\rm Ap}(\mS;m)$ with respect
to $\leq_{\mS}$ if and only if $s-m$ is a pseudo-Frobenius number of $\mS$.  From this result and (\ref{apery}) we directly deduce the following. 

\begin{proposition}\label{tipoconstante} ${\rm type}(\mS) = {\rm type}(\mS^e)$.
\end{proposition}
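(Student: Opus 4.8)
The plan is to reduce everything to the order-theoretic description of the type recalled just before the statement. By \cite[Proposition 7]{fgh}, the pseudo-Frobenius numbers of $\mS$ are precisely the integers $s-m$ with $s$ a maximal element of $\bigl({\rm Ap}(\mS;m),\leq_{\mS}\bigr)$, and since $s\mapsto s-m$ is injective this shows that ${\rm type}(\mS)$ equals the number of maximal elements of the poset $\bigl({\rm Ap}(\mS;m),\leq_{\mS}\bigr)$; likewise ${\rm type}(\mS^e)$ is the number of maximal elements of $\bigl({\rm Ap}(\mS^e;m),\leq_{\mS^e}\bigr)$, where $m=a_1$. By \eqref{apery} the map $w\mapsto ew$ is a bijection ${\rm Ap}(\mS;m)\to{\rm Ap}(\mS^e;m)$, so it suffices to prove that this bijection is an isomorphism of posets; an isomorphism carries maximal elements to maximal elements bijectively, so the two counts then agree.

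To check that $w\mapsto ew$ is order-preserving in both directions, I would first record the elementary equivalence, valid for every integer $x$,
\[
x\in\mS \quad\Longleftrightarrow\quad ex\in\mS^e .
\]
For ``$\Rightarrow$'', writing $x=\sum_i c_i a_i$ with the $c_i$ nonnegative integers gives $ex=(ec_1)a_1+\sum_{i\ge 2}c_i(ea_i)\in\mS^e$. For ``$\Leftarrow$'', write $ex=c_1 a_1+\sum_{i\ge 2}c_i(ea_i)$ with the $c_i$ nonnegative integers; then $c_1 a_1=e\bigl(x-\sum_{i\ge 2}c_i a_i\bigr)$ is a multiple of $e$, and since $\gcd(e,a_1)=1$ we get $e\mid c_1$, say $c_1=ec_1'$; dividing the relation by $e$ yields $x=c_1' a_1+\sum_{i\ge 2}c_i a_i\in\mS$.

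Applying this equivalence with $x=w'-w$, for $w,w'\in{\rm Ap}(\mS;m)$, we obtain $w\leq_{\mS}w'\iff w'-w\in\mS\iff e(w'-w)\in\mS^e\iff ew\leq_{\mS^e}ew'$, so $w\mapsto ew$ is the desired poset isomorphism and the proposition follows. The argument is short and I do not expect a genuine obstacle; the only step needing a little care is the implication $ex\in\mS^e\Rightarrow x\in\mS$, where the hypothesis $\gcd(e,a_1)=1$ is exactly what allows the factor $e$ to be pulled out of the coefficient of $a_1$ and hence an expression for $x$ over the generators of $\mS$ to be recovered.
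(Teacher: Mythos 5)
Your proof is correct and follows exactly the route the paper intends: the paper derives the proposition ``directly'' from \cite[Proposition 7]{fgh} and the bijection \eqref{apery}, and your argument simply fills in the omitted detail that $w\mapsto ew$ is an isomorphism of posets (via the equivalence $x\in\mS\Leftrightarrow ex\in\mS^e$, where $\gcd(e,a_1)=1$ is used just as you say). No gaps.
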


We now present a helpful characterization of pseudo-symmetric semigroups.

\begin{lemma} \label{pslemma}Let $\mS$ be a numerical semigroup. The following conditions are equivalent:
\begin{itemize}
\item[(a)] $\mS$ is pseudo-symmetric 
\item[(b)] If we write $w(i)$ for the unique $w\in Ap(\mS;m)$ satisfying $w\equiv i$ (mod $m$), we have that $$
w(g/2+i)+w(g/2-i)=w(g)+
\begin{cases}
m &\text{if $i\equiv0$ (mod $m$),}\\
0 &\text{otherwise.}
\end{cases}
$$
\item[(c)] $
g(\mS)=2N(\mS)-2.$
\end{itemize}
\end{lemma}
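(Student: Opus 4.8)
The plan is to prove the cycle of implications (a) $\Rightarrow$ (b) $\Rightarrow$ (c) $\Rightarrow$ (a). Two elementary facts will be used throughout. By (\ref{frob-apery}) the maximum of ${\rm Ap}(\mS;m)$ is $g+m$, so in the notation of part (b) one has $w(g)=g+m$. Also, an integer $x$ lies in $\mS$ if and only if $x\ge w(i)$ for the unique $i$ with $i\equiv x\pmod m$; hence the non-negative gaps of $\mS$ in the residue class of $i$ are precisely the integers $\equiv i\pmod m$ lying in $[0,w(i)-m]$, there are $N(\mS)$ gaps in all, and each lies in $[0,g]$. Finally, in each of the three implications the hypothesis already forces $g$ to be even --- for (b) this is needed even to make sense of $w(g/2)$, and $2N-2$ is even --- so one may assume $g$ even.

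For (a) $\Rightarrow$ (b), assume $\mS$ is pseudo-symmetric, so $g/2\notin\mS$ and for every integer $x\neq g/2$ at least one of $x$ and $g-x$ lies in $\mS$. Fix $i$ and set $p\equiv g/2+i$, $q\equiv g/2-i\pmod m$, so $p+q\equiv g\pmod m$; then $w(p)+w(q)\in\mS$ is congruent to $g$, hence $w(p)+w(q)\ge w(g)=g+m$. For $i\not\equiv0\pmod m$ (so $p\not\equiv g/2$) I would apply pseudo-symmetry to the largest gap $w(p)-m$ of the class of $p$, which is $\neq g/2$ when it is non-negative: then $g-(w(p)-m)=g+m-w(p)$ lies in $\mS$ and in the class of $q$, so $g+m-w(p)\ge w(q)$, forcing $w(p)+w(q)=g+m$, which is statement (b) in this case. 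The case $i\equiv0\pmod m$ has to be handled separately, the point being that the largest gap of the class of $g/2$ is $g/2$ itself, about which pseudo-symmetry is silent; instead one shows directly that $g/2+m\in\mS$ (otherwise, applying pseudo-symmetry to $g/2+m$ when $g/2+m\le g$, or using $g/2+m>g$ otherwise, one would be forced to $g/2\in\mS$), whence $w(g/2)=g/2+m$ and $2w(g/2)=g+2m=w(g)+m$.

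For (b) $\Rightarrow$ (c), I would sum the identity of (b) over a complete residue system $i=0,\dots,m-1$. Since $i\mapsto g/2+i$ and $i\mapsto g/2-i$ are permutations of $\Z/m\Z$, the left-hand side equals $2\sum_{w\in{\rm Ap}(\mS;m)}w$ and the right-hand side equals $m\,w(g)+m=m(g+m+1)$; substituting the resulting value of $\sum_{w}w$ into (\ref{formul-N}) yields $N(\mS)=\frac{g+m+1}{2}-\frac{m-1}{2}=\frac{g+2}{2}$, that is, $g=2N-2$. For (c) $\Rightarrow$ (a), assume $g=2N-2$ and partition $\{0,\dots,g\}$ into the $g/2$ pairs $\{x,g-x\}$ with $x\neq g/2$ together with the singleton $\{g/2\}$: a pair cannot lie entirely in $\mS$ (else $g\in\mS$) and $g/2\notin\mS$, so $\mS$ has at least $g/2+1$ gaps in $[0,g]$; since it has exactly $N=g/2+1$, each pair contains exactly one gap, i.e.\ for every $x\neq g/2$ exactly one of $x,g-x$ lies in $\mS$. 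Together with the trivial cases $x<0$ (then $g-x>g\in\mS$) and $x>g$ (then $x\in\mS$) this gives $\mS\cup(g-\mS)=\Z\setminus\{g/2\}$, so $\mS$ is pseudo-symmetric.

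I expect the main obstacle to be the inequality $w(p)+w(q)\le g+m$ in (a) $\Rightarrow$ (b), and in particular the bookkeeping for a residue class that contains no non-negative gap (the case $w(p)<m$): there one runs the argument instead on the class of $q$, and if $w(q)<m$ as well, one notes that $w(p)+w(q)$ is then an integer congruent to $g$ modulo $m$ lying in $[g+m,2m)$, hence equal to $g+m$. Everything else is a one-line summation or a counting argument on the involution $x\mapsto g-x$.
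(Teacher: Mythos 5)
Your proof is correct, but it is organized differently from the paper's. The paper proves the two equivalences (a)$\Leftrightarrow$(b) and (b)$\Leftrightarrow$(c) separately (four implications), whereas you close a single cycle (a)$\Rightarrow$(b)$\Rightarrow$(c)$\Rightarrow$(a). Your (a)$\Rightarrow$(b) is essentially the paper's argument repackaged: where the paper writes $w(g/2+i)+w(g/2-i)-w(g)=\lambda_i m$ and rules out $\lambda_i>0$ for $i\not\equiv 0$ by exhibiting two elements outside $\mS\cup(g-\mS)$, you establish the two inequalities $w(p)+w(q)\ge g+m$ and $\le g+m$ directly; note that your worry about the case $w(p)<m$ is unnecessary, since pseudo-symmetry is a statement about all of $\Z$ and applies to the negative integer $w(p)-m$ just as well (it is $\not\equiv g/2 \bmod m$, hence lies in $g-\mS$). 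Your (b)$\Rightarrow$(c) is the paper's summation verbatim. The genuine divergence is the last leg: the paper proves (c)$\Rightarrow$(b) by running the summation backwards, obtaining non-negative integers $c_i$ with $\sum_i c_i=1$ and using the symmetry $c_i=c_j$ for $i+j\equiv 0$ to force $c_0=1$; you instead prove (c)$\Rightarrow$(a) by the classical gap-counting argument on the involution $x\mapsto g-x$ of $\{0,\dots,g\}$. Your route is arguably more elementary and self-contained (it reproves the standard fact that $\Delta=1$ characterizes pseudo-symmetry without passing through the Ap\'ery set again), at the cost of not giving a direct proof of (b)$\Rightarrow$(a) or (c)$\Rightarrow$(b); the paper's symmetric layout makes each pairwise equivalence available on its own, and its (c)$\Rightarrow$(b) step is the one actually invoked later when verifying condition (b) for concrete L-shapes.
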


\begin{proof} 

[$(a) \Longrightarrow (b)$] For all $i$, it is clear that $w(g/2+i)+w(g/2-i) \equiv w(g) \ ({\rm mod} \ m)$, so 
$w(g/2+i)+w(g/2-i) - w(g) = \lambda_i m$ for some $\lambda_i \in \Z$. We observe that $w(g) = g(\mS) + m$, which 
implies that $\lambda_i \geq 0$, otherwise $g \in \mS$. Moreover, if $\lambda_i > 0$,
we get that $(w(g/2 - i) - m) + (w(g/2 + i) - \lambda_i m) = g(\mS)$. From this equality 
we derive that $w(g/2 - i) - m,\, w(g/2 + i) - \lambda_i m \notin \mS \cup (g - \mS)$. Since $\mS$ is pseudo-symmetric, we obtain that
$g/2 = w(g/2-i) - m = w(g/2 + i) - \lambda_i m$, so $\lambda_i = 1$ and $i \equiv 0 \ ({\rm \ mod}\ m)$.
\smallskip

[$(b) \Longrightarrow (a)$] Take $s \notin \mS \cup (g - \mS)$. Since $s \notin \mS$, there exists $i_0$ such that $w(g/2 - i_0) = s + \mu m$ with $\mu \in \Z^+$.
If $i_0 \not\equiv 0 \ ({\rm mod}\ m)$, then $s + \mu m + w(g/2 - i_0) = w(g) = g + m$ and we get that $s \in g - \mS$, a contradiction. If $i_0 \equiv 0\ ({\rm mod}\ m)$, then 
$2 s + 2 \mu m = 2 w(g/2) = w(g) + m = g + 2m$. Hence $\mu = 1$ and $s + \mu m = g/2 + m$, from where we finally obtain that $s = g/2$.
\smallskip

[$(b) \Longrightarrow (c)$] Notice that $g+m=w(g)$. Summing over a complete system of residues $i$ (mod $m$) and using formula (\ref{formul-N}), we get
\begin{align*}
2N-2&=\frac{1}{m}\sum_i (w(g/2+i)+w(g/2-i))-m-1\\
{}&= \frac{1}{m}\sum_i w(g)-m =g.
\end{align*}

[$(c) \Longrightarrow (b)$] We find that 
\begin{align*}
g&=2N-2=\frac{1}{m}\sum_{i=0}^{m-1} (w(g/2+i)+w(g/2-i))-m-1\\
{}&= \frac{1}{m}\sum_{i=0}^{m-1}(w(g)+c_im)-m-1=g+\sum_i c_i -1,
\end{align*}
for integers $c_i\geq0$. Thus, $c_i=0$ with the exception of one value of $i$ mod $m$ for which $c_i=1$. Since $c_i = c_j$ whenever $i + j = m$, we deduce that $c_0 = 1$ and thus
(b) holds.
\end{proof} 

\section{The type of almost arithmetic progressions} \label{sec:aap}

Let us quickly recall some notions and results given in \cite{rod2} needed in the rest of the paper.  Assume
that $\gcd(a,d)= 1$, let $s_{-1}=a$ and set $s_0$ the only integer such that
\[
 ds_0\equiv c\pmod{s_{-1}},\quad0\leq s_0<s_{-1}.
\]
If $s_0 = 0$ we set $m = -1$. Otherwise, we use the Euclidean algorithm with negative division remainders,
\begin{alignat*}{2}
s_{-1} & = q_1 s_0-s_1,         &&0\leq s_1<s_0;\\
s_0 & = q_2 s_1-s_2,         &&0\leq s_2<s_1;\\
s_1 & = q_3 s_2-s_3,         &&0\leq s_3<s_2;\\
   &\dots & \\
s_{m-2} & = q_m s_{m-1}-s_m,\quad &&0\leq s_m<s_{m-1};\\
s_{m-1} & = q_{m+1} s_m,     &&0=s_{m+1}<s_m.
\end{alignat*}
%If $s_0=0$, we put $m=-1$. If $m\geq0$, we have
%\[
%\frac{s_{-1}}{s_0}= q_1-\cfrac{1}{q_2 
%                                     -\cfrac{1}{
%                                                q_3 -\cfrac{1}{
%                                                               \begin{array}{ll}
%                                                                   \!\!\!\!\!\!\!\ddots & \\
%                                                                   &\displaystyle q_m-\frac{1}{q_{m+1}}
%                                                                            \end{array}
%                                                               }
%                                               }}
%\]
%which is known as the {\em Jung-Hirzebruch continued fraction} of $s_{-1}/s_0$.

\noindent We have $s_m=\gcd(a,c)$. We define integers $P_i$ by $P_{-1}=0$, $P_0=1$, and (if $m\geq0$),
\[
 P_{i+1}=q_{i+1}P_i - P_{i-1},\quad i=0,\dots,m.
\]
Then, by induction on $i$,
\begin{equation*} \label{sP}
 s_iP_{i+1}-s_{i+1}P_i=a,\qquad i=-1,0\dots,m,
\end{equation*}
and
\[
 0 = P_{-1} < 1 = P_0<\dots<P_{m+1}=\frac{a}{s_m}.
\]
In addition we have,
\begin{equation} \label{Ps}
 ds_i\equiv cP_i\pmod{a},\quad i=-1,\ldots,m+1.
\end{equation}
Putting
\[
 R_i=\frac{1}{a}\left((a+kd)s_i-kcP_i\right),
\]
we see that all the $R_i$ are integers. Moreover, we have
$R_{-1}=a+kd$, $\displaystyle R_0=\frac{1}{a}\left((a+kd)s_0-kc\right)$, and
\[
 R_{i+1}=q_{i+1}R_i - R_{i-1},\quad i=0,\dots,m.
\]
Furthermore,
\[
 -\frac{c}{s_m}=R_{m+1}<R_m<\dots<R_0<R_{-1}=a+kd,
\]
so there is a unique integer $v$ such that
\[
 R_{v+1}\leq0<R_v.
\]

For each $i$, let $\tau(i)$ be the smallest integer for which there exist non-negative integers $x,y$ such that
\begin{equation} \label{tau}
\tau(i)=(a+kd)x+kcy,\quad dx+cy\equiv i\pmod{a}.
\end{equation}

If there are more than one such pair $x,y$ for some $i$, choose the one with $y$ minimal. This gives us a unique set $L$ of pairs of nonnegative integers, with $|L|=a$. The set $L$ consists of all lattice points in a closed L-shaped region in the $x,y$-plane. Sometimes the L-shape degenerates to a rectangle or an interval. R\o dseth \cite{rod2} proved that $L=A\cup B$, where
$$\begin{array}{llll} A & = & \{(x,y) \, \vert \,  0\le x\le s_v-1,& 0\le y\le P_{v+1}-P_{v}-1\} \\ B & = & \{(x,y) \, \vert \, 0\le x\le s_v-s_{v+1}-1, & P_{v+1}-P_v\le y\le P_{v+1}-1 \}. \end{array}$$
Moreover, R\o dseth showed that if we consider the map

\begin{align}\label{apRodseth}
%& \text{if we consider the map} \\\nonumber
& \varphi:  \ \ \Z^2  \ \ \rightarrow  \ \ \Z \\ \nonumber
& \ \ \ \ \ (x,y) \mapsto \   \left\lceil \frac{x}{k} \right\rceil a + xd + yc,\\
& \text{then } {\rm Ap}(\mS;a) = \varphi(L). \nonumber
\end{align}

Combining (\ref{apRodseth}) and (\ref{frob-apery}), R\o dseth easily deduced that

\begin{equation}\label{frob-num}
g(\mS)={\rm max}\{\varphi(s_v-1,P_{v+1}-P_v-1), \varphi(s_v-s_{v+1}-1,P_{v+1}-1)\} - a.
\end{equation}

%\begin{thm}[\cite{rod2}] \label{ApS}
%For the numerical AA-semigroup $S$, we have
%\begin{equation*} \label{apsa}
% Ap(S;a)=\left\{a\left\lceil\frac{y}{k}\right\rceil+dy+cz\mid(y,z)\in A\cup B\right\}.
%\end{equation*}
%\end{thm}

%\section{The type of Almost Arithmetic Semigroups} \label{sec:type}

Fr\"oberg, Gottlieb and H\"{a}ggkvist proved in \cite[Theorem 11]{fgh} that every three generated numerical semigroup has type at most $2$. In the same 
paper the authors include an observation due to J. Backelin showing that there is no upper bound on the type of $\langle a_1,\ldots,a_t \rangle$ only in terms of $t$ for $t \geq 4$.
Indeed, he provided a family $(\mS_n)_{n \in \N}$ of $4$ generated numerical semigroups whose type is $2n + 3$. In this section we consider AA-semigroups, i.e.,
numerical semigroups of the form $\mS = \langle a, a + d,\ldots,a+kd, c\rangle$, where $a,d,k,c$ are positive integers. These semigroups generalize three generated 
semigroups. We prove that the type of AA-semigroups is bounded in terms of the number of generators of the semigroup. 

\begin{theorem}\label{cmtype} Let $\mS$ be an AA-semigroup, then $type(\mS) \leq 2k$.
\end{theorem}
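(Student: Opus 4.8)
The plan is to recast ${\rm type}(\mS)$ as a count of $\leq_{\mS}$-maximal elements of an Ap\'ery set, transport that count to R\o dseth's L-shaped lattice region through the map $\varphi$, and then bound it by inspecting the boundary of that region. By Proposition~\ref{tipoconstante} we may assume $\gcd(a,d)=1$: if $\delta=\gcd(a,d)$ then $\mS=\mT^{\delta}$ for the AA-semigroup $\mT=\langle a/\delta,\,a/\delta+d/\delta,\,\dots,\,a/\delta+kd/\delta,\,c\rangle$ (here $\gcd(\delta,c)=1$ because $\mS$ is a numerical semigroup), which has the same $k$ and, by that proposition, the same type. With $\gcd(a,d)=1$, recall from Section~\ref{sec:ap} that ${\rm type}(\mS)$ equals the number of elements of ${\rm Ap}(\mS;a)$ that are maximal for $\leq_{\mS}$ (by \cite[Proposition 7]{fgh}), and that by (\ref{apRodseth}) ${\rm Ap}(\mS;a)=\varphi(L)$ with $L=A\cup B$; since $|L|=a=|{\rm Ap}(\mS;a)|$, the restriction $\varphi|_{L}$ is a bijection. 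So the theorem reduces to showing that at most $2k$ lattice points $(x,y)\in L$ have image $\varphi(x,y)$ maximal in $\varphi(L)$.

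The crucial observation is that two ``moves'' inside $L$ strictly increase $\varphi$ with respect to $\leq_{\mS}$: for every $(x,y)\in\Z^{2}$,
\[
\varphi(x+k,y)-\varphi(x,y)=a+kd\in\mS,\qquad \varphi(x,y+1)-\varphi(x,y)=c\in\mS,
\]
using $\lceil (x+k)/k\rceil=\lceil x/k\rceil+1$, and both differences are positive. Hence, if $(x,y)\in L$ and either $(x+k,y)\in L$ or $(x,y+1)\in L$, then $\varphi(x,y)$ is not $\leq_{\mS}$-maximal in $\varphi(L)$. Consequently every maximal element is $\varphi(x,y)$ for some $(x,y)$ in
\[
M:=\{(x,y)\in L\ :\ (x+k,y)\notin L\ \text{ and }\ (x,y+1)\notin L\},
\]
and it suffices to prove the combinatorial bound $|M|\le 2k$.

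This last step I would carry out using R\o dseth's explicit description $L=A\cup B$. Since $A$ and $B$ occupy adjacent $y$-intervals, each column of $L$ has the form $\{x\}\times[0,h(x)]$, where $h(x)=P_{v+1}-1$ when $x\le s_v-s_{v+1}-1$ and $h(x)=P_{v+1}-P_v-1$ otherwise; likewise each row has the form $[0,r(y)]\times\{y\}$, where $r(y)=s_v-1$ when $y\le P_{v+1}-P_v-1$ and $r(y)=s_v-s_{v+1}-1$ otherwise. Thus $(x,y)\in M$ forces $y=h(x)$ (top of its column) together with $x>r(y)-k$ (within the last $k$ entries of its row). Because $h$ takes only the two values $P_{v+1}-1$ and $P_{v+1}-P_v-1$, the points of $M$ fall into at most two groups according to the value of $y=h(x)$; inside a group the value of $y$, hence of $r(y)$, is fixed, so the constraint $r(y)-k<x\le r(y)$ leaves at most $k$ choices of $x$. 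Therefore $|M|\le 2k$. The degenerate configurations in which $L$ is an interval or a full rectangle are handled the same way and give at most $k$ maximal elements.

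I expect the conceptual core to be the passage in the first two paragraphs: realizing that ``type'' is best read on $L$ through $\varphi$, and that the order $\leq_{\mS}$ on $\varphi(L)$ is governed by the two one-line identities above. Once this is in place the bound $2k$ is essentially forced, since $L$ is an L-shape whose top boundary is a staircase with two steps; the only point needing care is to check that this staircase produces exactly two groups of size $k$, which is immediate from R\o dseth's formulas for $A$ and $B$.
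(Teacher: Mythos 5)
Your proposal is correct and follows essentially the same route as the paper: reduce to $\gcd(a,d)=1$ via Proposition~\ref{tipoconstante}, use the identities $\varphi(x+k,y)-\varphi(x,y)=a+kd$ and $\varphi(x,y+1)-\varphi(x,y)=c$ to see that $\leq_{\mS}$-maximal elements of ${\rm Ap}(\mS;a)=\varphi(L)$ come from at most $k$ points in each of the two rectangles $A$ and $B$. Your column/row bookkeeping with $h(x)$ and $r(y)$ is just a more explicit rendering of the paper's one-line appeal to the rectangular shape of $A$ and $B$.
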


By Proposition \ref{tipoconstante}, it suffices to prove this result when $\gcd(a,d) = 1$. 

%In this setting, R\o dseth described in \cite{rod2} the Apery set of $\mS$ with
%respect to $a$. The following result is just a rewriting of R\o dseth's result.
%
%\begin{theorem}
%\label{apRodseth}

%If we consider the mapping $$\begin{array}{cccl} \varphi:& \Z^2 & \rightarrow & \Z \\ & (x,y) & \mapsto &  \lceil \frac{x}{k} \rceil a + xd + yc. \end{array}$$
%Then ${\rm Ap}(\mS;a) = \varphi(L)$, where $L = A \cup B \subset \Z^2$ with: $$\begin{array}{llll} A & = & \{(x,y) \, \vert \,  0\le x\le s_v-1,& 0\le y\le P_{v+1}-P_{v}-1\} \\ B & = & \{(x,y) \, \vert \, 0\le x\le s_v-s_{v+1}-1, & P_{v+1}-P_v\le y\le P_{v+1}-1 \}. \end{array}$$
%\end{theorem}

%Moreover, from this result and (\ref{frob-apery}), R\o dseth easily deduced that the Frobenius number of $\mS$ equals
%${\rm max}\{\varphi(s_v-1,P_{v+1}-P_v-1), \varphi(s_v-s_{v+1}-1,P_{v+1}-1)\} - a.$
\medskip

\noindent {\bf Proof of Theorem \ref{cmtype}.} It suffices to observe that for all $(x,y) \in \Z^2$, $\varphi(x+k,y) - \varphi(x,y) = a + kd \in \mS$ and that $\varphi(x,y+1) - \varphi(x,y) = c \in \mS$. The rectangular grid shape of $A$ shows that in $\varphi(A)$ there are at most $k$ maximal elements of ${\rm Ap}(\mS;a)$ with respect of $\leq_{\mS}$. The same argument works for $B$. So, there are at most $2k$ maximal elements in ${\rm Ap}(\mS;a)$ with respect to $\leq_{\mS}$ and, hence, ${\rm type}(\mS) \leq 2k$. 
\hfill$\Box$

\medskip

The bound provided in Theorem \ref{cmtype} is sharp. Indeed, consider the following

\begin{example}For all $k \geq 1$ we consider the AA-semigroup $\mS_k = \langle a,\ldots,a+kd,c\rangle$ with $a := 3k+2, d := 1$ and $c := 5k+3$. We observe that
$s_0 = 2k+1$. The equality $a = 2 s_0 - k$ yields that $s_1 = k$ and $P_1 = 2$. Since $R_1 = -2k \leq 0 < k+1 = R_0$, we get that $v = 0$. A
direct application of (\ref{apRodseth}) gives that ${\rm Ap}(\mS;a) = \varphi(L)$, where $L$ is the set of Figure
\ref{fig:ejemplo}.

 \begin{figure}[htb]
  \centering
  \includegraphics{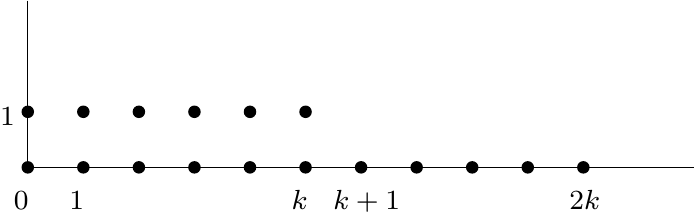}
  \caption{L-shape in bijection with ${\rm Ap}(\mS_k;a)$}
  \label{fig:ejemplo}
 \end{figure}

Then, 
 $${\rm Ap}(\mS;a) = \{0,a,a+1,\ldots,a+k,2a+k+1,\ldots,2a+2k,c,c+a+1,\ldots,c+a+k\}$$ Hence, the maximal elements of ${\rm Ap}(\mS;a)$ 
with respect to $\leq_{\mS}$ are $\{2a+k+i, c+ a + i \, \vert \, 1 \leq i \leq k\}$ (see Figure \ref{fig:ejemplo2}) and ${\rm type}(\mS) = 2k$.
  \begin{figure}[htb]
  \centering
  \includegraphics{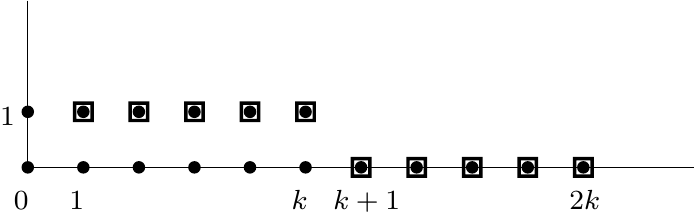}
  \caption{Points corresponding to maximal elements of ${\rm Ap}(\mS_k;a)$}
  \label{fig:ejemplo2}
 \end{figure}

\end{example}

\section{Pseudo-symmetric almost arithmetic semigroups} \label{sec:char}

The goal of this section is to characterize pseudo-symmetry for AA-semigroups. When $\mS$ is a three
generated numerical semigroup, Rosales and Garc\'ia-S\'anchez 
gave a characterization of pseudo-symmetry in \cite{r&g}. They proved that if $\mS$ is a three generated 
numerical semigroup then  $\mS$ is pseudo-symmetric if and only $\mS = \langle c(b-1) + 1, (c-1)a + 1,  b(a-1) + 1 \rangle$ with ${\rm gcd}(c(b-1) + 1, (c-1)a + 1) = 1$.
\smallskip

Here, we only consider the case when $\mS$ is an AA-semigroup generated by at least $4$ elements, i.e., when $k \geq 2$.
However, one could easily extend our proof for three generated semigroups and recover the result of Rosales and Garc\'ia-S\'anchez. 
\smallskip

As mentioned in Section \ref{sec:ap}, if $\gcd(a,d) \neq 1$, then $\mS$ is not pseudo-symmetric, so from now on we suppose that $a$ and $d$
are relatively prime. The idea of our proof is to apply the description of ${\rm Ap}(\mS;a)$ in terms of the set $L$ and the map $\varphi$ 
given in (\ref{apRodseth}) together with the characterization of pseudo-symmetry given in Lemma \ref{pslemma} (b). 
\smallskip

In the forthcoming we denote by $(x_0,y_0)$ the element of $L$ such that $\varphi(x_0,y_0) = g(\mS) + a$. 

By  (\ref{frob-num}) it follows that 
$$\hbox{$(x_0,y_0)=(s_v-s_{v+1}-1,P_{v+1}-1)$ or $(s_v-1,P_{v+1}-P_v-1)$.}$$

Before stating and proving the main results we need the following
three lemmas. The proof of the first one is a straightforward checking (we leave it to the reader).

\begin{lemma}\label{biyeccion} Let $\mS$ be an AA-semigroup. 

If $(x_0,y_0) = (s_v-s_{v+1}-1,P_{v+1}-1)$, then 
\begin{itemize}
\item[(a1)] $\varphi(x,y) + \varphi(x_0 - x, y_0 - y) \equiv \varphi(x_0,y_0)\ ({\rm mod}\ a)$ for  $0 \leq x \leq x_0$, $0, \leq y \leq y_0$.
\item[(a2)] $\varphi(x,y) + \varphi(x_0 + s_{v} - x, y_0 - P_{v} - y) \equiv \varphi(x_0,y_0)\ ({\rm mod}\ a)$ for $x_0 < x \leq s_{v}-1$, $0 \leq y \leq y_0 - P_v$.
\end{itemize}

If $(x_0,y_0) = (s_v-1,P_{v+1}-P_v-1)$, then 
\begin{itemize}
\item[(b1)] $\varphi(x,y) + \varphi(x_0 - x, y_0 - y) \equiv \varphi(x_0,y_0)\ ({\rm mod}\ a)$ for  $0 \leq x \leq x_0$, $0, \leq y \leq y_0$.
\item[(b2)] $\varphi(x,y) + \varphi(x_0 - s_{v+1} - x, y_0 + P_{v+1} - y) \equiv \varphi(x_0,y_0)\ ({\rm mod}\ a)$ for $0 \leq x \leq x_0 - s_{v+1}$, $y_0 < y \leq y_0 + P_{v+1}$.
\end{itemize}
\end{lemma}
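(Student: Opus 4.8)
The plan is to prove Lemma \ref{biyeccion} by a direct computation using the explicit formula for $\varphi$ together with the recurrences and congruences established earlier in Section \ref{sec:aap}. Since $\varphi(x,y) = \lceil x/k \rceil a + xd + yc$, working modulo $a$ kills the first term, so that $\varphi(x,y) \equiv xd + yc \pmod a$. Hence all four assertions (a1), (a2), (b1), (b2) reduce to checking a linear congruence in the second components $xd+yc$ of the arguments.

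\medskip

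First I would handle (a1) and (b1) simultaneously, since they are formally identical: the claim $\varphi(x,y) + \varphi(x_0-x, y_0-y) \equiv \varphi(x_0,y_0) \pmod a$ becomes, after discarding the $\lceil \cdot \rceil a$ terms, the triviality
\[
(xd + yc) + ((x_0-x)d + (y_0-y)c) = x_0 d + y_0 c,
\]
an exact identity in $\Z$, hence certainly a congruence mod $a$. So (a1) and (b1) require nothing beyond the definition of $\varphi$.

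\medskip

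For (a2) and (b2) the argument shifts of $s_v, P_v$ (resp.\ $s_{v+1}, P_{v+1}$) enter, so the identity is no longer exact and one genuinely needs a congruence mod $a$. In case (a2), reducing mod $a$ turns the claim into
\[
(xd+yc) + \bigl((x_0+s_v-x)d + (y_0-P_v-y)c\bigr) \equiv x_0 d + y_0 c \pmod a,
\]
i.e.\ $s_v d - P_v c \equiv 0 \pmod a$, which is exactly the congruence \eqref{Ps}: $ds_i \equiv cP_i \pmod a$ with $i = v$. Likewise (b2) reduces to $s_{v+1} d \equiv P_{v+1} c \pmod a$, again an instance of \eqref{Ps} with $i = v+1$.

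\medskip

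The one point that deserves care — and is the only real obstacle — is to verify that the shifted arguments appearing in (a2) and (b2) actually lie in $\Z^2$ in the ranges where they are asserted, and (implicitly, for the intended use of the lemma) land inside $L = A \cup B$, so that the statement is not vacuous; this is the ``straightforward checking'' the authors leave to the reader. Concretely, in (a2) one has $x_0 = s_v - s_{v+1} - 1$, $y_0 = P_{v+1}-1$, and one must see that for $x_0 < x \le s_v - 1$, $0 \le y \le y_0 - P_v$ the point $(x_0 + s_v - x,\, y_0 - P_v - y)$ has both coordinates in the admissible bounds — the first coordinate ranges over $0 \le x_0+s_v-x \le s_{v+1}-1 \le s_v - s_{v+1}-1$ using $s_{v+1} < s_v$ appropriately, and the second over $0 \le y_0 - P_v - y \le P_{v+1}-P_v-1$; one checks that these are exactly the defining inequalities of $A$. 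The analogous bookkeeping for (b2) uses $0 \le x \le x_0 - s_{v+1}$ and $y_0 < y \le y_0 + P_{v+1}$ together with $P_{v+1}-P_v \le y \le P_{v+1}-1$, matching the description of $B$. Since all of these are elementary consequences of the chain $0 = P_{-1} < P_0 < \cdots < P_{m+1}$, of $0 \le s_{i+1} < s_i$, and of the definition of $v$, the verification is mechanical and I would simply record the key inequalities rather than write every line.
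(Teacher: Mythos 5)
Your proof is correct and is precisely the ``straightforward checking'' that the paper leaves to the reader: since the lemma asserts only congruences modulo $a$, reducing $\varphi(x,y)\equiv xd+yc\pmod a$ makes (a1) and (b1) exact identities, and (a2), (b2) follow from (\ref{Ps}) with $i=v$ and $i=v+1$ respectively. One small slip in your optional aside (not part of the statement): in (a2) the partner's first coordinate ranges over $s_v-s_{v+1}\le x_0+s_v-x\le s_v-1$, not over $[0,s_{v+1}-1]$, so the pairing sends $A\cap\{x>x_0\}$ to itself; this does not affect the validity of the lemma as stated.
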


\begin{remark}\label{remarkx0} Let $\mS$ be a pseudo-symmetric AA-semigroup and let us denote by $(x_1,y_1)$ the element in $L$ such that $\varphi(x_1,y_1) = (g(\mS)/2) + a.$ Then,
\begin{itemize}
\item[(a)] $2 \varphi(x_1,y_1) = g(\mS) + 2a = \varphi(x_0,y_0) + a$, and 
\item[(b)] if $2 \varphi(x_1,y_1) = \varphi(x',y') + \varphi(x'',y'')$ with $(x',y'), (x'',y'') \in L$, then $x_1 = x' = x''$ and $y_1 = y' = y''$ (this is a 
consequence of Lemma \ref{pslemma}(b)).
\end{itemize}
\end{remark}

\begin{lemma}\label{lemax1}Let $\mS$ be a pseudo-symmetric AA-semigroup. Then,
\begin{itemize} \item[(a)] either $y_1 = 0$ or $(x_1,y_1 + 1) \notin L$,
\item[(b)] either $x_1 < k$ or $(x_1+k,y_1) \notin L$, and
\item[(c)] either $x_1 = 0$, $(x_1+1,y_1) \notin L$ or $x_1 \equiv 1 \ ({\rm mod}\ k)$.
\end{itemize} 
\end{lemma}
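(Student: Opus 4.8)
The strategy is to exploit Remark~\ref{remarkx0}(b): for a pseudo-symmetric AA-semigroup, the only way to write $2\varphi(x_1,y_1)$ as $\varphi(x',y')+\varphi(x'',y'')$ with both points in $L$ is the trivial one $x'=x''=x_1$, $y'=y''=y_1$. Each of (a), (b), (c) will follow by assuming the stated alternatives all fail and then producing a genuinely different pair of points of $L$ whose $\varphi$-values sum to $2\varphi(x_1,y_1)$, contradicting that uniqueness.

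\textbf{Part (a).} Suppose $y_1\geq 1$ and $(x_1,y_1+1)\in L$. Since $\varphi(x,y+1)-\varphi(x,y)=c$, the pair $(x_1,y_1-1)$ and $(x_1,y_1+1)$ both lie in $L$ (the first because $y_1-1\geq 0$ and $L$ is ``down-closed'' in the relevant column, the second by hypothesis), and $\varphi(x_1,y_1-1)+\varphi(x_1,y_1+1)=2\varphi(x_1,y_1)$. This is a distinct representation, contradiction; hence $y_1=0$ or $(x_1,y_1+1)\notin L$.

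\textbf{Part (b).} Suppose $x_1\geq k$ and $(x_1+k,y_1)\in L$. Using $\varphi(x+k,y)-\varphi(x,y)=a+kd$, the pairs $(x_1-k,y_1)$ and $(x_1+k,y_1)$ both lie in $L$ — the first since $x_1-k\geq 0$ and one checks $L$ is closed under subtracting $k$ from the first coordinate (both $A$ and $B$ are rectangles whose $x$-ranges are intervals starting at $0$), the second by hypothesis — and their $\varphi$-values sum to $2\varphi(x_1,y_1)$. Again a contradiction, so $x_1<k$ or $(x_1+k,y_1)\notin L$.

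\textbf{Part (c): the main obstacle.} This is the delicate case because the ceiling in $\varphi(x,y)=\lceil x/k\rceil a+xd+yc$ is not additive in $x$: $\varphi(x+1,y)-\varphi(x,y)$ equals $d$ when $x\not\equiv 0\pmod k$ but $a+d$ when $x\equiv 0\pmod k$. Assume $x_1\neq 0$, $(x_1+1,y_1)\in L$, and $x_1\not\equiv 1\pmod k$; I must derive a contradiction. The idea is to compare $\varphi(x_1-1,y_1)+\varphi(x_1+1,y_1)$ with $2\varphi(x_1,y_1)$: their difference is $\big(\lceil (x_1-1)/k\rceil+\lceil (x_1+1)/k\rceil-2\lceil x_1/k\rceil\big)a$, which is $0$ unless $x_1\equiv 1\pmod k$ (where it is $a$) or $x_1\equiv 0\pmod k$ (where it is $-a$). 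So if $x_1\not\equiv 0,1\pmod k$, then $(x_1-1,y_1),(x_1+1,y_1)\in L$ (using $x_1\geq 1$ for the first, hypothesis for the second, and that both $A$ and $B$ have $x$-intervals starting at $0$) give a distinct representation — contradiction. It remains to rule out $x_1\equiv 0\pmod k$ with $x_1\geq k$: here $\varphi(x_1-1,y_1)+\varphi(x_1+1,y_1)=2\varphi(x_1,y_1)-a$, so instead I adjust the $y$-coordinate of one point if $c$ and $a$ interact, or better, combine with Remark~\ref{remarkx0}(a) and Part~(b): since $x_1\equiv 0 \pmod k$ and $x_1 \geq k$, one shows $(x_1+k, y_1) \in L$ would violate (b), forcing $x_1 = s_v-1$ or similar boundary position, and then a direct inspection of the L-shape boundary (using the explicit descriptions of $A$ and $B$ and the two possible values of $(x_0,y_0)$ from Lemma~\ref{biyeccion}) shows $x_1\equiv 0\pmod k$ is incompatible with $2\varphi(x_1,y_1)=\varphi(x_0,y_0)+a$. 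I expect this boundary analysis — splitting on which of the two forms $(x_0,y_0)$ takes and tracking the ceiling function across the corner of the L — to be the technically heaviest part, but it is a finite case check using only Lemma~\ref{biyeccion} and the formulas already established.
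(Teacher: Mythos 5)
Parts (a) and (b) of your proof are correct and coincide with the paper's argument: in each case the two displaced points lie in $L$ because every row and every column of $L$ is an integer interval starting at $0$, and their $\varphi$-values sum exactly to $2\varphi(x_1,y_1)$, contradicting Remark \ref{remarkx0}(b).

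Part (c) has a genuine gap. First, a sign error: writing $x_1=qk+r$, the quantity $\lceil (x_1-1)/k\rceil+\lceil (x_1+1)/k\rceil-2\lceil x_1/k\rceil$ equals $+1$ when $r=0$ and $-1$ when $r=1$, not the other way around; so in your leftover case $x_1\equiv 0\ ({\rm mod}\ k)$, $x_1\geq k$, one has $\varphi(x_1-1,y_1)+\varphi(x_1+1,y_1)=2\varphi(x_1,y_1)+a$, not $2\varphi(x_1,y_1)-a$. More seriously, you do not actually close this case: the proposed ``boundary analysis'' is only announced (``one shows\dots'', ``I expect this\dots to be a finite case check''), and the intermediate claim that $(x_1+k,y_1)\notin L$ forces $x_1=s_v-1$ does not follow from part (b) — it only places $x_1$ within $k$ of the right end of its row. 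The paper avoids the case split entirely by using the full strength of Lemma \ref{pslemma}(b) rather than only the uniqueness statement of Remark \ref{remarkx0}(b): since $(x_1\pm 1,y_1)\in L$ and their residues are $g/2\pm d$ with $d\not\equiv 0\ ({\rm mod}\ a)$ (as $\gcd(a,d)=1$ and $a>1$), pseudo-symmetry gives the \emph{exact} identity $\varphi(x_1-1,y_1)+\varphi(x_1+1,y_1)=w(g)=\varphi(x_0,y_0)$; on the other hand $x_1\not\equiv 1\ ({\rm mod}\ k)$ gives $\varphi(x_1-1,y_1)=\varphi(x_1,y_1)-d$ and $\varphi(x_1+1,y_1)\geq\varphi(x_1,y_1)+d$, so the sum is at least $2\varphi(x_1,y_1)=\varphi(x_0,y_0)+a$, a contradiction covering all residues of $x_1$ at once. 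With the corrected sign, your residue-$0$ case would also close immediately by this same exact identity, so the fix is to replace the unexecuted boundary analysis with an appeal to Lemma \ref{pslemma}(b).
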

\begin{proof}
If (a) does not hold, then $2 \varphi(x_1,y_1) = \varphi(x_1,y_1-1) + \varphi(x_1,y_1+1)$, but this contradicts Remark \ref{remarkx0}(b). Similarly, if  (b) does not hold, then $2 \varphi(x_1,y_1) = \varphi(x_1-k,y_1) + \varphi(x_1+k,y_1)$, a contradiction. Finally, if (c) does not hold, then  $\varphi(x_1,y_1) = \varphi(x_1-1,y_1) + d$ and
$\varphi(x_1,y_1) \leq \varphi(x_1+1,y_1) - d$. Hence, $\varphi(x_0,y_0) + a = 2 \varphi(x_1,y_1) \leq \varphi(x_1-1,y_1) + \varphi(x_1+1,y_1) = \varphi(x_0,y_0)$, which is again a contradiction.
\end{proof}

\begin{lemma}\label{lemax0}Let $\mS$ be a pseudo-symmetric AA-semigroup such that $(x_0,y_0) \neq (2,0)$. Then,
either $x_0 = 0$ or $x_0  \equiv 1\ ({\rm mod} \ k)$. In particular, this implies that  
$$
\varphi(x_0,y_0) = \varphi(x_0 - x, y_0 - y) + \varphi(x,y)
$$ for all $0 \leq x \leq x_0$, $0 \leq y \leq y_0$.
\end{lemma}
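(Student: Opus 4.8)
The plan is to exploit the fact, recorded in Remark \ref{remarkx0}(a), that $\varphi(x_0,y_0) = 2\varphi(x_1,y_1) - a$, together with the fine structure of the L-shaped set $L = A \cup B$ and the decomposition lemmas already at our disposal. The first step is to locate $(x_1,y_1)$ inside $L$ using Lemma \ref{lemax1}: conditions (a), (b), (c) severely restrict where $(x_1,y_1)$ can sit. In particular (c) says that, unless $x_1 = 0$ or $(x_1+1,y_1)\notin L$, we must have $x_1 \equiv 1 \pmod k$; combined with (b), which forces $x_1 < k$ or $(x_1+k,y_1)\notin L$, one sees that $(x_1,y_1)$ is pinned to the ``top'' or ``right'' boundary of one of the rectangles $A$, $B$. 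The idea is then to translate this boundary information for $(x_1,y_1)$ into the analogous statement for $(x_0,y_0)$ via the doubling relation, since the first coordinate of $\varphi$ involves the ceiling $\lceil x/k\rceil$, and $\lceil x_0/k\rceil$ is essentially $2\lceil x_1/k \rceil$ up to a controlled correction precisely when $x_1 \equiv 1 \pmod k$ or $x_1 = 0$.

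Concretely, I would argue as follows. Write $(x_0,y_0)$ in one of its two admissible forms from (\ref{frob-num}), say first $(x_0,y_0) = (s_v - s_{v+1} - 1, P_{v+1}-1)$. Using $2\varphi(x_1,y_1) = \varphi(x_0,y_0) + a$ and the explicit formula $\varphi(x,y) = \lceil x/k\rceil a + xd + yc$, compare the $x$- and $y$-components modulo the natural ``currencies'' $a$, $d$, $c$ — since $\gcd(a,d)=1$ and the lattice points all lie in a region of controlled size, the equation $2(\lceil x_1/k\rceil a + x_1 d + y_1 c) = \lceil x_0/k\rceil a + x_0 d + y_0 c + a$ forces $x_0 = 2x_1$ and $y_0 = 2y_1$ unless a ``carry'' occurs, and a carry in the ceiling term happens exactly when $x_1 \not\equiv 0,1 \pmod k$. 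Lemma \ref{lemax1}(c) rules the bad case out except possibly when $(x_1,y_1)=(1,0)$ sits at the boundary, which is precisely the degenerate configuration $(x_0,y_0) = (2,0)$ that the statement excludes. Having established $x_0 = 2x_1$ (respectively $x_0 = 2x_1 - \text{something small}$) one reads off from $x_1 \equiv 0$ or $1 \pmod k$ that $x_0 \equiv 0$ or $2 \pmod k$; a second pass, using that $(x_0,y_0)$ itself must be decomposable in the sense of Lemma \ref{lemax1}(c) applied with the roles swapped (or by directly invoking Lemma \ref{biyeccion} to see when $\varphi$ is additive along $L$), upgrades this to $x_0 \equiv 0$ or $1 \pmod k$.

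For the ``In particular'' clause: once $x_0 = 0$ or $x_0 \equiv 1 \pmod k$, the key observation is that for $0 \le x \le x_0$ we then have the ceiling identity $\lceil x_0/k \rceil = \lceil x/k\rceil + \lceil (x_0-x)/k\rceil$ — this is a purely arithmetic fact about ceilings that holds exactly when $x_0 \equiv 0$ or $1 \pmod k$ — and hence $\varphi(x_0,y_0) = \varphi(x,y) + \varphi(x_0-x,y_0-y)$ for all admissible $x,y$ since the $xd$ and $yc$ parts are trivially additive. I expect the main obstacle to be the careful bookkeeping of the ``carry'' in the doubling relation: one has to rule out, using the size bounds $0 \le x \le s_v - 1$, $0 \le y \le P_{v+1}-1$ and the ordering $0 = P_{-1} < \cdots < P_{m+1} = a/s_m$, that $2\varphi(x_1,y_1) - a$ could equal $\varphi(x_0,y_0)$ in more than one way, so that Remark \ref{remarkx0}(b) can be brought to bear; handling the two shapes of $(x_0,y_0)$ and the symmetric second case $(x_0,y_0)=(s_v-1,P_{v+1}-P_v-1)$ will require essentially the same argument run twice.
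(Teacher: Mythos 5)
Your overall strategy---transferring the congruence information from $(x_1,y_1)$ to $(x_0,y_0)$ via the doubling relation $2\varphi(x_1,y_1)=\varphi(x_0,y_0)+a$---does not go through, and the paper's proof is entirely different. The central gap is your claim that the doubling relation ``forces $x_0=2x_1$ and $y_0=2y_1$ unless a carry occurs.'' The equation $2\bigl(\lceil x_1/k\rceil a+x_1d+y_1c\bigr)=\lceil x_0/k\rceil a+x_0d+y_0c+a$ cannot be split coefficient-wise, since $a$, $d$, $c$ are just integers satisfying relations (indeed $ds_i\equiv cP_i\pmod a$), and Lemma \ref{biyeccion} only provides congruences modulo $a$, not equalities. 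In fact $x_0=2x_1$, $y_0=2y_1$ is false in general: in Case~I of the proof of Theorem \ref{pseudo} one has $x_1=(2s_v-s_{v+1}-1)/2=(x_0+s_v)/2$ and $y_1=(y_0-P_v)/2$, because $(x_1,y_1)$ is paired with itself under the pairing (a2) of Lemma \ref{biyeccion}, not (a1). Your final ``second pass'' is also not viable as stated: for $k\geq 3$ the congruences $x_0\equiv 2$ and $x_0\equiv 1\pmod k$ are mutually exclusive, so one cannot ``upgrade'' the former to the latter---one must rule it out, and you give no mechanism. (Two smaller inaccuracies: the carry $2\lceil x_1/k\rceil-\lceil 2x_1/k\rceil$ equals $1$ exactly when $x_1\bmod k$ lies in $(0,k/2]$, not when $x_1\not\equiv 0,1\pmod k$; and the ceiling identity $\lceil x_0/k\rceil=\lceil x/k\rceil+\lceil(x_0-x)/k\rceil$ for all $0\leq x\leq x_0$ fails for $x_0\equiv 0\pmod k$ with $x_0>0$---it holds precisely when $x_0=0$ or $x_0\equiv 1\pmod k$, which is why the lemma's dichotomy is stated the way it is.)

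The idea you are missing is a direct appeal to the definition of pseudo-symmetry. Suppose $x_0>0$ and $x_0\not\equiv 1\pmod k$. Then $\lceil x_0/k\rceil=\lceil(x_0-1)/k\rceil$, so $\varphi(x_0,y_0)=\varphi(x_0-1,y_0)+d$, i.e.\ $\varphi(x_0-1,y_0)-a=g-d$. This number is not in $\mS$ (it is an Ap\'ery element minus $a$), and $g-(g-d)=d\notin\mS$ because $d\in\mS$ would force $\mS=\langle a,c\rangle$, which is symmetric, not pseudo-symmetric. Pseudo-symmetry then forces $g-d=g/2$, so $(x_1,y_1)=(x_0-1,y_0)$ and $2\varphi(x_1,y_1)=\varphi(x_0,y_0)+a$ gives $\varphi(x_1,y_1)=a+d$. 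The only points of $L$ with $\varphi$-value $a+d$ are $(1,0)$ or $(0,\lambda)$, yielding $(x_0,y_0)=(2,0)$ (excluded by hypothesis) or $x_0=1\equiv 1\pmod k$ (contradiction). This three-line argument replaces your entire carry analysis.
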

\begin{proof} Firstly we observe that $\mS$ cannot be a two generated semigroup, otherwise it would be symmetric.
By contradiction, assume that $x_0 > 0$ and that $x_0 \not\equiv 1\ ({\rm mod} \ k)$. Then,
$\varphi(x_0,y_0) = \varphi(x_0-1,y_0) + d$.  We claim that $d \notin \mS$. Indeed, if $d \in \mS$,
then $\mS = \langle a, c \rangle$, a contradiction. Therefore, $x_1 = x_0-1$ and $y_1 = y_0$. The equality $2 \varphi(x_1,y_1) = \varphi(x_0,y_0) + a$ yields $\varphi(x_1,y_1) = a+d$.
Hence we have that either $(x_1,y_1) = (1,0)$ or $(x_1,y_1) = (0,\lambda)$ for some $\lambda > 0$. Then, $(x_0,y_0) = (2,0)$ or $x_0 = 1$, a contradiction. The reader can easily verify the second part of the result.
\end{proof}

We may now proceed with our first characterization. 
%This result solves the problem of recognition of
%a pseudo-symmetric AA-semigroup, i.e., the problem that receives as input $a,k,d,c$ and determines whether the 
%corresponding AA-semigroup is psuedo-symmetric or not.

\begin{theorem}\label{pseudo} Let $\mS = \langle a, a+d, \ldots, a+kd, c\rangle$ with $a,d,c \in \Z^+$ and $k \geq 2$.
Then, $\mS$ is pseudo-symmetric if and only if $\gcd(a,d) = 1$ and one of the following statements holds:
\begin{itemize}
\item[(a)] $\mS = \langle 3, 3+d, 3+2d \rangle$
\item[(b)] $a(c-1) = 2(c+d)$ with $c$ odd,
\item[(c)] $s_{v} \equiv 3 \, ({\rm mod}\ k)$, $s_{v+1} = 1$, $P_{v+1} = P_v  + 1$ and $R_v = 3$,
\item[(d)] $s_{v} \equiv 1 \, ({\rm mod}\ k)$, $s_{v+1} = 2k-1$, $P_{v+1} = P_v  + 1$ and $R_v = 1$,
\item[(e)] $s_0 \equiv 2\, ({\rm mod} \ k)$, $s_0 = s_1 + 1$ and $R_1 = 1 - 2k$, or
\item[(f)] $s_0 \equiv 2\, ({\rm mod} \ k)$, $s_0 = s_1 + 3$ and $R_1 = - 1$.

\end{itemize}
\end{theorem}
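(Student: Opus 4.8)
\textbf{Proof proposal for Theorem \ref{pseudo}.}

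The plan is to split the argument according to where the point $(x_0,y_0)$ attaining the Frobenius number sits, and to exploit the fact that for a pseudo-symmetric semigroup the point $(x_1,y_1)$ with $\varphi(x_1,y_1)=g(\mS)/2+a$ must be an ``exact midpoint'' of $L$ in the very rigid sense of Remark \ref{remarkx0}(b): the only way to write $2\varphi(x_1,y_1)$ as a sum of two $\varphi$-values of lattice points of $L$ is the trivial one. First I would dispose of the small/degenerate cases: if $\mS$ is two-generated it is symmetric, not pseudo-symmetric, so $k\geq 2$ together with $\gcd(a,d)=1$ really is in force; the exceptional case $(x_0,y_0)=(2,0)$ corresponds (via $\varphi$) to $a(c-1)=2(c+d)$, and one checks directly that pseudo-symmetry here is equivalent to $c$ being odd, giving case (b); and a separate bookkeeping of the even smaller case $s_0=0$ (where $m=-1$) together with $a=3$ produces case (a). For the generic situation I would invoke Lemma \ref{lemax0} to conclude $x_0=0$ or $x_0\equiv 1\pmod k$, so that $\varphi$ is ``additive along $L$'' in the strong form stated there; this is what makes the combinatorics of finding $(x_1,y_1)$ tractable.

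Next I would use Lemma \ref{biyeccion} to set up the involution on $L$ that realizes the relation $\varphi(x,y)+\varphi(x^*,y^*)\equiv\varphi(x_0,y_0)\pmod a$, distinguishing the two shapes of $(x_0,y_0)$ coming from (\ref{frob-num}): the ``$B$-corner'' $(s_v-s_{v+1}-1,P_{v+1}-1)$ and the ``$A$-corner'' $(s_v-1,P_{v+1}-P_v-1)$. In each case the fixed point of the involution — which, by Lemma \ref{pslemma}(b) and Remark \ref{remarkx0}, must be exactly $(x_1,y_1)$, and must be unique — forces $s_v-s_{v+1}-1$ (resp.\ $s_v-1$) and $P_{v+1}-1$ (resp.\ $P_{v+1}-P_v-1$) to be even, i.e.\ pins down the parities of $s_v$, $s_{v+1}$, $P_v$, $P_{v+1}$. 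Then Lemma \ref{lemax1}(a)--(c) cuts down the possibilities further: part (a) forces $y_1$ to lie on the boundary of the relevant rectangle of $A$ or $B$, part (b) controls how far $x_1$ is from the right edge, and part (c) forces $x_1\equiv 1\pmod k$ unless $x_1\in\{0\}$ or $(x_1+1,y_1)\notin L$. Matching these constraints against the explicit description $L=A\cup B$ should leave only a handful of combinatorial configurations: roughly, either $(x_1,y_1)$ is the midpoint of a single row/column of $A$ (leading, after translating back through the recurrences $s_iP_{i+1}-s_{i+1}P_i=a$ and the definition of $R_i$, to the conditions $s_v\equiv 1\pmod k$, $s_{v+1}=2k-1$, $P_{v+1}=P_v+1$, $R_v=1$ of case (d)), or it is the midpoint inside $B$ (giving $s_v\equiv 3\pmod k$, $s_{v+1}=1$, $P_{v+1}=P_v+1$, $R_v=3$ of case (c)), or the analogous situations at the $v=0$ level of the L-shape where the region degenerates and one reads off cases (e) and (f) with $R_1=1-2k$ and $R_1=-1$ respectively.

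For the converse direction I would simply verify, for each of (a)--(f), that the stated arithmetic conditions force $(x_1,y_1)$ to exist in $L$, that $2\varphi(x_1,y_1)=\varphi(x_0,y_0)+a$, and that no nontrivial decomposition $2\varphi(x_1,y_1)=\varphi(x',y')+\varphi(x'',y'')$ with $(x',y'),(x'',y'')\in L$ is possible; by Lemma \ref{pslemma}(b) this yields pseudo-symmetry. The conditions in (c)--(f) are exactly engineered so that the L-shape is ``centrally symmetric through $(x_1,y_1)$'' up to the one forbidden middle point, so this verification, while tedious, is mechanical once the dictionary between the $\varphi$-picture and the $s_i,P_i,R_i$ data is in place. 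The main obstacle I expect is not any single deep idea but the case analysis itself: one must carefully enumerate the ways the forced parities of $s_v,s_{v+1},P_v,P_{v+1}$ interact with the rectangle-plus-rectangle structure of $A\cup B$ and with the boundary conditions of Lemma \ref{lemax1}, checking that degenerate shapes (when $s_{v+1}=0$, or $B$ empty, or $v=0$) are handled — and translating the resulting purely geometric conditions on $(x_1,y_1)$ back into the clean divisibility statements $s_{v+1}\in\{1,2k-1\}$, $P_{v+1}=P_v+1$, $R_v\in\{1,3\}$ via the identities relating the $R_i$ to $\varphi$ at the corners. Getting the bookkeeping of these reductions exactly right, with no overlooked configuration, is where the real work lies.
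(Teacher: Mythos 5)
Your outline follows essentially the same route as the paper's proof: dispose of $(x_0,y_0)=(2,0)$ to get (a) and (b), use Lemma \ref{lemax0} together with the involutions of Lemma \ref{biyeccion} to locate $(x_1,y_1)$ as the forced midpoint, apply Lemma \ref{lemax1}(a)--(c) to pin it down, split on the two corners coming from (\ref{frob-num}) to read off (c)--(f), and verify the converse case by case via Lemma \ref{pslemma}(b). The only difference is that you defer the actual case enumeration and the translation into the conditions $R_v\in\{1,3\}$, $R_1\in\{1-2k,-1\}$ (including eliminating the $x_0=0$ subcases and the degenerate rectangle when $s_{v+1}=0$), which is exactly where the paper's proof spends its effort, but the plan itself is the paper's argument.
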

\begin{proof}
$(\Rightarrow)$  As we mentioned in Section 2, $\gcd(a,d) = 1$ is a necessary condition for $\mS$ to be pseudo-symmetric.

We first consider $(x_0,y_0) = (2,0)$. We observe that $(x_1,y_1) = (1,0)$. Otherwise $\varphi(x_0,y_0) - \varphi(1,0) = d \in \mS$ by Lemma \ref{pslemma}, and then
$\mS = \langle a, c\rangle$, which is symmetric, a contradiction. If $a = 3$, then ${\rm Ap}(\mS; 3) = \{0,3+d,3+2d\}$ and $\mS = \langle 3,3+d,3+2d\rangle$, getting (a). Assume now
that $a > 3$. We have that $(1,1) \notin L$, otherwise $\varphi(x_0,y_0) - \varphi(1,1) = d - c \in \mS$, but this implies that $\mS = \langle a, c\rangle$, a contradiction. Then, 
the L-shape is $L = \{(1,0),(2,0)\} \cup \{(0,\lambda)\, \vert \, 0 \leq \lambda \leq P_{v+1}-1\}$. From here we deduce that $s_v = 3$, $s_{v+1}=2$, $a = P_{v} + 3 = P_{v+1}+2$. Moreover, the equality $\varphi(0,i) + \varphi(0,P_{v+1}-i) = \varphi(2,0)$ for all $i \in \{1,\ldots,P_{v}\}$ yields that $(a-2)c = P_{v+1} c = a + 2d$ or, equivalently, that 
$a(c - 1) = 2 (c + d)$. To get (b) it only remains to prove that $c$ is odd. We proceed by contradiction. Assume that $c$ is even, then the equality $a(c - 1) = 2 (c + d)$ proves that $a$ is also even. 
Since $g(\mS) = 2d$, then $\mS = \langle a, a+d,c \rangle$. Hence, if we set  $\mS' = \langle a/2, a + d, c/2 \rangle$, then by (\ref{deltasym}) we get that $\Delta(\mS) = 2\Delta(\mS') \neq 1$,  and $\mS$ is not pseudo-symmetric.

From now on, we assume that $(x_0,y_0) \neq (2,0)$, then by Lemma \ref{lemax0} it follows that either $x_0 = 0$ or $x_0 \equiv 1 \, ({\rm mod}\ k)$.
We split the proof in two cases according to the value of $g(\mS)$ given by equation (\ref{frob-num}).
\smallskip

{\emph Case I:} $g(\mS) = \varphi(s_v-s_{v+1}-1,P_{v+1}-1) - a.$
If $s_{v+1} = 0$,  then we observe that the L-shape degenerates to a rectangle and, by the second part of Lemma \ref{lemax0}, we obtain that $\mS$ is symmetric  (we could also have obtained that $\mS$ is symmetric by applying \cite[Theorem 5]{j&o}). If $s_{v+1} > 0$, then  by Lemma \ref{biyeccion} we have that $x_1 = (2s_v - s_{v+1} - 1)/2$ and $y_1 = (P_{v+1} - P_v - 1)/2$. Moreover, by Lemma \ref{lemax1} (a) we have that $y_1 = 0$ and, hence, $P_{v+1} - P_v = 1$.  Moreover, by Lemma \ref{lemax1} (b) we also have that $(x_1 + k,y_1) \notin L$ and, hence, $x_1 + k \geq s_v$, which implies that $s_{v+1} \leq 2k-1$.

Since $x_1 > 0$ then, by Lemma \ref{lemax1} (c), we have two subcases.
\smallskip

\emph{Subcase 1:} If $(x_1 + 1,0)\notin L$ or, equivalently, $s_{v+1} = 1$. According to Lemma \ref{lemax0} we have that either $x_0 = 0$ or $x_0 \equiv 1 \ ({\rm mod}\ k)$. If $x_0 = 0$, then $x_1 = 1$ and the identity $2 \varphi(x_1,y_1) = \varphi(x_0,y_0) + a$ implies that $P_v c = a + 2d$. However, this is not possible because $0 < R_v = \frac{1}{a}
[(a+kd)s_v - kcP_v] = 2-k \leq 0$. If $x_0 \equiv 1 \ ({\rm mod}\ k)$. Since $k \geq 2$, we observe that $\lceil x_0/k \rceil = \lceil (x_0+1)/k \rceil = (x_0 - 1 + k)/k$ . Thus the identity  $2 \varphi(x_1,y_1) = \varphi(x_0,y_0) + a$ yields \begin{equation}\label{caso1}P_v c = \frac{x_0-1}{k}a + (x_0+2)d=\frac{s_v-3}{k}a + ds_v.\end{equation} From here we directly deduce that $R_v = \frac{1}{a}[(a+kd)s_v - k c P_v] = 3$ and we obtain (c).
\smallskip

\emph{Subcase 2:} If $x_1 \equiv 1 \ ({\rm mod}\ k)$.  According to Lemma \ref{lemax0} we have that either $x_0 = 0$ or $x_0 \equiv 1 \ ({\rm mod}\ k)$. If $x_0 = 0$, then $x_1 = \lambda k+1$ and $(x_1+k,0) \notin L$ and thus $\lambda = 0$. This implies that the Ap\'ery set has $P_v + 2 = a$ elements and that $P_v c = 2(a+d) - a$ and thus $(a-2)c = a + 2d$. However, the fact that $R_v > 0$ implies that $(a+kd)s_v - kcP_v > 0$, but this can only happen if $2a + 2kd - kc(a-2) = 2a + 2kd - k(a+2d) > 0$, but this implies $k = 1$, a contradiction. If $x_0 \equiv 1 \ ({\rm mod}\ k)$, then we get that $x_0 = s_v - 2k$ and $x_1 = s_v - k$. Moreover, $P_v c = a (s_v-1)/k + d s_v$ and, hence, $kcP_v = (a+kd)s_v - a$. This implies that $R_v = 1$ and we obtain (d).
\smallskip

Case II: $g(\mS) = \varphi(s_v-1,P_{v+1}-P_v-1) - a.$
Firstly, we shall prove that 
\begin{itemize} 
\item[(i)] $v = 0$
\item[(ii)] $s_0 \equiv 2\ ({\rm mod}\ k)$
\item[(iii)] $s_0 - s_{1} \in \{1,3\}$
\end{itemize}

Since $x_0 \not= 0$, by Lemma \ref{lemax0}, we have that $x_0 \equiv 1\ ({\rm mod}\ k)$ 
and that $g(\mS) + a = \varphi(x,y) + \varphi(x_0-x,y_0-y)$ for all $0 \leq x \leq x_0$ and $0 \leq y \leq y_0$. Then, by Lemma \ref{biyeccion}, 
$x_1 =  (x_0 - s_{v+1})/2$, $y_1 =  (y_0 + P_{v+1})/ 2$ and by Lemma \ref{lemax1} we get also that $y_1 = P_{v+1}-1$ and that $P_v = 1$ yielding to $v = 0$. 
\smallskip

We observe that (d) is equivalent to prove that $x_1 = 0$ or $x_1 = 1$. By Lemma \ref{lemax1} we have that
$x_1 = 0$  or $x_1 \equiv 1\ ({\rm mod}\ k)$. Nevertheless, also by Lemma \ref{lemax1} we obtain that $x_1 < k$, so we conclude that $x_1 \in \{0,1\}$.

Firstly we assume that $s_0 - s_1 = 1$, then we have that  $a = q_1 s_0 - s_1 = (q_1-1)s_0 + 1$.

Since $v = 0$, we have that $R_1 = q_1 R_0 - (a + kd) \leq 0$ and, hence, $a + kd = q_1 R_0 + \lambda$ for some $\lambda \geq 0$.
Moreover, $a R_0 = (a+kd)s_0  - kcP_0 = (q_1 R_0 + \lambda)s_0 - kc$, so we deduce that $kc = (R_0+\lambda)(s_0-1)+ \lambda$.
We observe that $x_1 = 0$, $y_1 = y_0 + 1$. Then the equality $2\varphi(x_1,y_1) = \varphi(x_0,y_0) + a$ yields $q_1 c = (\frac{s_0 - 2}{k} + 2)a + (s_0 - 1)d$, which implies that $kq_1c = (s_0-1)(a+kd)+(2k-1)a$. Then $q_1 (\lambda s_0 + s_0 R_0 - R_0) = (s_0 - 1)(q_1 R_0 + \lambda) + (2k-1)a$ and we derive that $\lambda a = (2k-1)a$. Therefore, $\lambda = 2k-1$ and thus obtaining (e).
\smallskip

For $s_0 - s_1 = 3$, then we have that  $a = q_1 s_0 - s_1 = (q_1-1)s_0 + 3$.

Since $v = 0$, we have that $R_1 = q_1 R_0 - (a + kd) \leq 0$ and, hence, $a + kd = q_1 R_0 + \lambda$ for some $\lambda \geq 0$.
Moreover, $a R_0 = (a+kd)s_0  - kcP_0 = (q_1 R_0 + \lambda)s_0 - kc$, so we deduce that $kc = (R_0+\lambda)(s_0-3)+ 3\lambda$.
We observe that $x_1 = 1$, $y_1 = y_0 + 1$. Then the equality $2\varphi(x_1,y_1) = \varphi(x_0,y_0) + a$ yields $q_1 c = (\frac{s_0 - 2}{k})a + (s_0 - 3)d$, which implies that $kq_1c = (s_0-3)(a+kd)+a$. Then $q_1 (\lambda s_0 + s_0 R_0 - 3R_0) = (s_0 - 3)(q_1 R_0 + \lambda) + a$ and we derive that $\lambda a = a$. Thus, $\lambda = 1$ and we conclude (f).

$(\Leftarrow)$ In (a) it is clear that ${\rm Ap}(\mS,3) = \{0,3+d,3+2d\}$ and that $\mS$ is pseudo-symmetric. In (b), the equality $\frac{c-1}{2} a = c+d $
yields that $s_0 = a-1$, then $s_i = a-1-i, P_i = i+1$ and $q_i = 2$ for all $i$. We also observe that $R_{a-4} > 0$, indeed, $$\begin{array}{lll} R_{a-4}   
& =  (3 (a+kd)  - kc(a-3))/a  & = 3 + (3d - (a-3)c)k/a \\ & = 3 + (3d - a - 2d + c)k/a \\ & = 3-k + k(d+c)/a \\ & = 3 + k(c+1)/2 > 0,\end{array}$$ 
and that $R_{a-3} = (2(a+kd) - kc(a-2))/a = (2(a+kd) - k(a+2d))/a = 2-k \leq 0$. Then $v = a-4$, the L-shape corresponds to the one in Figure \ref{fig:Ldemo} and it is straightforward
to check that $\mS$ is pseudo-symmetric.

 \begin{figure}[htb]
  \centering
  \includegraphics[scale=.5]{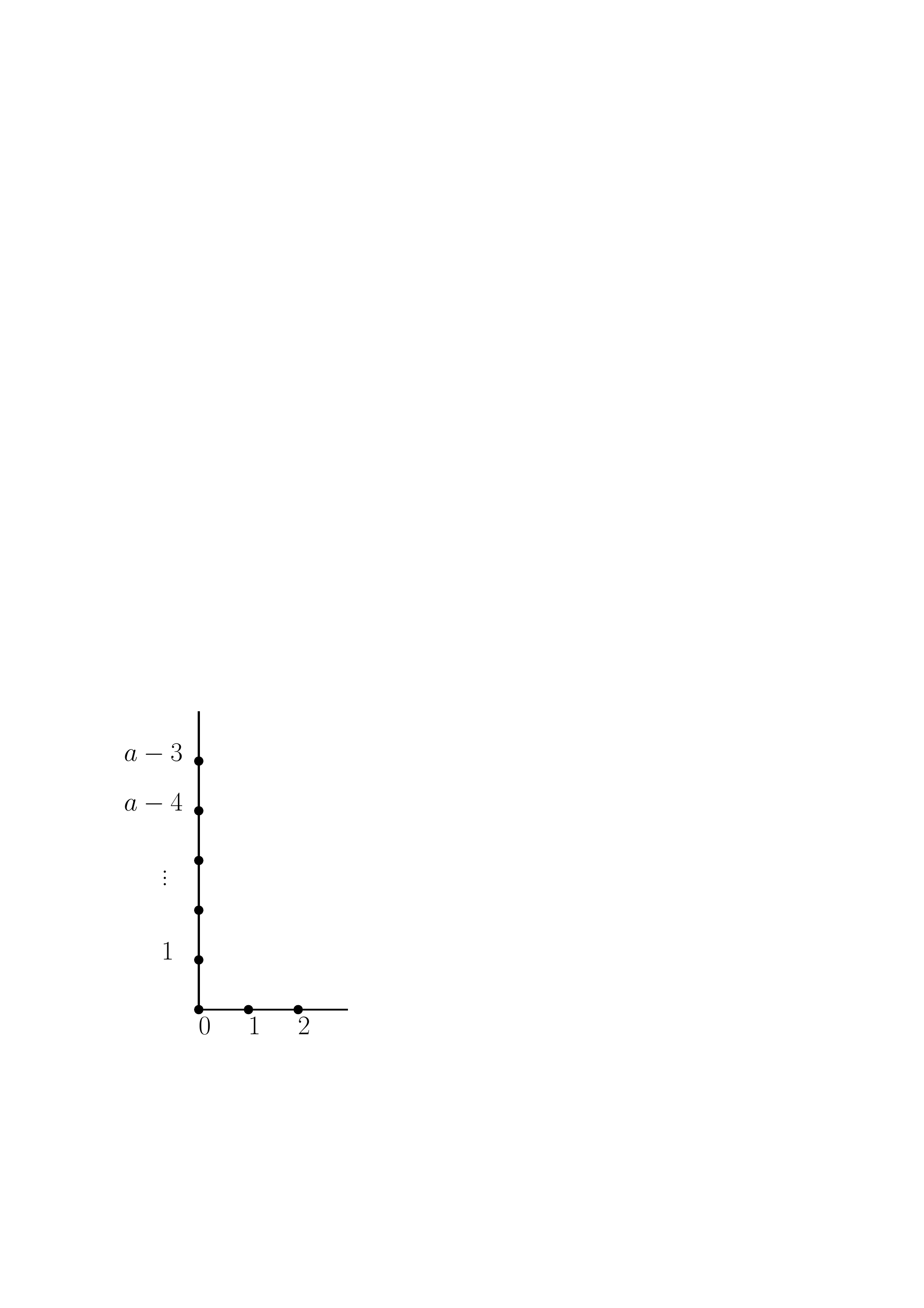}
  \caption{L-shape in Theorem \ref{pseudo}(b)}
  \label{fig:Ldemo}
 \end{figure}

 Both in (c) and (d) we have that $(x_0,y_0) = (s_v-s_{v+1}-1,P_{v+1}-1)$ and it is easy to check
that $\mS$ satisfies Lemma \ref{pslemma} (b) and, hence, it is pseudo-symmetric. For (e) and (f)
we observe that $v = 0$, we obtain that $(x_0,y_0) = (s_0-1,P_{1}-2)$ and, again, it is easy to check
that $\mS$ satisfies Lemma \ref{pslemma} (b) and, hence, $\mS$ is pseudo-symmetric.
\end{proof}

\bigskip
From Theorem \ref{pseudo} we deduce that the decision problem that receives as input $a,k,d,c$ and asks if the corresponding AA-semigroup $\mS$ is pseudo-symmetric is solvable in polynomial time. Indeed, Theorem \ref{pseudo} yields an algorithm for determining whether an AA-semigroup is pseudo-symmetric that relies on the computation
of some values of $s_i, R_i, P_i$ and, in \cite[Theorem 6]{j&o}, the authors proposed a polynomial time method to compute these values. As we mentioned in the introduction, a semigroup is irreducible if and only if it is either symmetric or pseudo-symmetric. In \cite{j&o}, it is proved that one can
check in polynomial time whether an AA-semigroup $\mS$ is symmetric. Hence, we directly derive the following.

\medskip

\begin{corollary}Given $a,k,d,c \in \Z^+$, the decision problem: "is the AA-semigroup $\mS = \langle a, a+d, \ldots, a+kd,c\rangle$ irreducible?", is solvable in polynomial time.
\end{corollary}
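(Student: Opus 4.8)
The plan is to reduce the problem to two polynomial-time decision procedures, one for symmetry and one for pseudo-symmetry, and to combine them via the equivalence ``irreducible $\iff$ symmetric or pseudo-symmetric'' recorded in Section~\ref{introduction} (equivalently $\Delta(\mS)\le 1$; see \cite{bdf,fgh,r&b}). Thus, on input $a,k,d,c$, the algorithm runs both subroutines and answers ``yes'' if and only if at least one answers ``yes''. Since the input is $a,k,d,c$ written in binary, ``polynomial time'' means polynomial in $\log a+\log d+\log k+\log c$; in particular $\mS$ has $k+2$ generators, which may be exponentially many in $\log k$, so neither subroutine may enumerate the generators or compute an Ap\'ery set naively. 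This is precisely why the machinery of \cite{rod2} --- the integers $s_i$, $P_i$, $R_i$, the length $m$ of the Euclidean-type expansion and the index $v$ with $R_{v+1}\le 0<R_v$ --- is the appropriate tool.

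First I would recall from \cite[Theorem 6]{j&o} that all of these auxiliary quantities can be produced in time polynomial in $\log a+\log d+\log k+\log c$: the sequence $s_{-1}>s_0>\cdots>s_m$ comes from a Euclidean-type algorithm, so $m=O(\log a)$ and each of $s_i$, $P_i\le a/s_m\le a$ and $|R_i|\le a+kd$ has polynomial bit-size, after which $v$ is found by inspection. The test $\gcd(a,d)=1$ is a single Euclidean algorithm, and by the discussion in Section~\ref{sec:ap} it is necessary for pseudo-symmetry, so the pseudo-symmetry subroutine rejects immediately when it fails.

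For symmetry I would invoke the characterization of symmetric AA-semigroups in \cite[Theorem 5]{j&o}, which (as its authors observe) reduces to checking a bounded number of arithmetic relations among the quantities above and hence runs in polynomial time. For pseudo-symmetry with $k\ge 2$ I would check $\gcd(a,d)=1$ together with whether at least one of the six conditions (a)--(f) of Theorem~\ref{pseudo} holds; each is a constant-size conjunction of equalities and congruences among $a,d,c$ and the computed $s_i,P_i,R_i,v$, hence checkable in polynomial time once the preprocessing is done. For $k=1$ one instead uses the characterization of pseudo-symmetric three-generated semigroups of Rosales and Garc\'ia-S\'anchez \cite{r&g} (or the extension of Theorem~\ref{pseudo} to $k=1$ mentioned above), again a fixed-size arithmetic check. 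Running the two subroutines in succession and taking the disjunction decides irreducibility in polynomial time. The only point requiring care --- entirely dispatched by \cite[Theorem 6]{j&o} --- is the guarantee that $s_i,P_i,R_i$, the length $m$ and the index $v$ have polynomially bounded bit-size and are computable in polynomial time; once this is in hand every remaining step is a fixed-size test. One should also verify that the degenerate situations ($s_0=0$ so $m=-1$, the L-shape collapsing to a rectangle or interval, and the two- and three-generated border cases) are each covered by an explicit condition or rejected outright, but there are only finitely many such cases and each is elementary.
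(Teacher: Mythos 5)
Your proposal is correct and follows essentially the same route as the paper: decompose irreducibility as ``symmetric or pseudo-symmetric'', decide symmetry via the characterization in \cite{j&o} and pseudo-symmetry via Theorem \ref{pseudo}, with \cite[Theorem 6]{j&o} supplying the polynomial-time computation of the $s_i, P_i, R_i$. Your additional remarks on bit-size bounds, the $k=1$ case via \cite{r&g}, and the degenerate configurations are sensible elaborations of what the paper leaves implicit.
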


\smallskip

Our second characterization gives a method to construct any pseudo-symmetric AA-semigroup. We first need the following
 
\begin{lemma} \label{lemmaP}
If $P_{j+1}-P_j=1$ for some $j$ in the interval $-1\leq j\leq m$, then 
\[
q_{i+1}=2\quad\text{for}\quad i=0,1,\ldots, j,
\]
and
\[
 P_i=i+1\quad\text{for}\quad i=-1,0\ldots,j+1.
\]
\end{lemma}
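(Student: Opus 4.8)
The plan is to show that the sequence of consecutive differences $P_{i+1}-P_i$ is non-decreasing in $i$; since $P_0-P_{-1}=1$, this makes the hypothesis $P_{j+1}-P_j=1$ collapse every intermediate difference to $1$, from which both conclusions fall out immediately. The one auxiliary input I need is that $q_{i+1}\geq 2$ for every $i=0,\dots,m$. This is elementary: from the Euclidean algorithm with negative remainders we have $q_{i+1}s_i = s_{i-1}+s_{i+1}$, and the $s_i$ are strictly decreasing and positive for $-1\leq i\leq m$ (with $s_{m+1}=0$), so $q_{i+1}s_i = s_{i-1}+s_{i+1} > s_i$, forcing $q_{i+1}>1$.

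Next I would use the defining recursion $P_{i+1}=q_{i+1}P_i-P_{i-1}$ to compute the second difference
\[
(P_{i+1}-P_i)-(P_i-P_{i-1}) \;=\; P_{i+1}-2P_i+P_{i-1} \;=\; (q_{i+1}-2)\,P_i .
\]
Since $P_i\geq 1>0$ for $i\geq 0$ and $q_{i+1}\geq 2$, the right-hand side is $\geq 0$; hence, writing $D_i:=P_{i+1}-P_i$, we get $D_{-1}\leq D_0\leq\cdots\leq D_m$. Because $D_{-1}=P_0-P_{-1}=1$, the assumption $D_j=P_{j+1}-P_j=1$ for some $j$ with $-1\leq j\leq m$ then forces $1=D_{-1}\leq D_0\leq\cdots\leq D_j=1$, so $D_i=1$ for all $i=-1,0,\dots,j$.

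Finally, from $D_i=1$ for $i=-1,\dots,j$, telescoping from $P_{-1}=0$ yields $P_i=i+1$ for $i=-1,0,\dots,j+1$. Moreover, for each $i=0,1,\dots,j$ we have $D_i=D_{i-1}$ (both equal $1$), so the second-difference identity above gives $(q_{i+1}-2)P_i=0$; since $P_i>0$, this forces $q_{i+1}=2$, which is the remaining claim. I do not expect a genuine obstacle here; the only points needing a little care are the bookkeeping of the shifted indices — in particular the boundary case $j=-1$, where the statement about the $q$'s is vacuous and where the degenerate situation $s_0=0$, $m=-1$ must be allowed — and making sure the strict positivity $P_i>0$ is in hand before dividing.
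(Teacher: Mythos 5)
Your proof is correct and follows essentially the same route as the paper: both show that the consecutive differences $P_{i+1}-P_i$ are non-decreasing (via $q_{i+1}\geq 2$ and the recursion $P_{i+1}=q_{i+1}P_i-P_{i-1}$) and then sandwich them between $P_0-P_{-1}=1$ and $P_{j+1}-P_j=1$. Your write-up is slightly more detailed in that it explicitly justifies $q_{i+1}\geq 2$ and spells out how $q_{i+1}=2$ follows from the vanishing second difference, steps the paper leaves to the reader.
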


\begin{proof} 
We have 
\[
	P_{i+1}=q_{i+1}P_i-P_{i-1}\geq2P_i-P_i,
\]
so that $P_{i+1}-P_i\geq P_i-P_{i-1}$ for all $i$. Hence
\[
1=P_{j+1}-P_j\geq P_j-P_{j-1}\geq\cdots\geq P_0-P_{-1}=1,
\]
and the lemma follows.
\end{proof}

\begin{remark}\label{remarkfacil} The condition $s_{v+1} = 1$ is equivalent to $v + 1 = m$ and $s_m = \gcd(a,c) = 1$.
Then, by Lemma \ref{lemmaP}, Theorem \ref{pseudo} (a) is equivalent to
$\gcd(a,c)=1$, $s_{m-1} \equiv 3 \, ({\rm mod}\ k)$, $P_{m+1} = m+2$ and $R_{m-1} = 3$.
\end{remark}

\begin{theorem}\label{construccion} Let $\mS = \langle a, a+d, \ldots, a+kd, c\rangle$ with $a,d,c \in \Z^+$ and $k \geq 2$.
Then, $\mS$ is pseudo-symmetric if and only if $\gcd(a,d) = 1$ and one of the following conditions holds:
\begin{itemize}
\item[(a)] $\mS = \langle 3, 3+d, 3+2d \rangle$
\item[(b)] $a(c-1) = 2(c+d)$ with $c$ odd,

\item[(c.1)] $a \equiv 1 {\text \ or \ }5\ ({\rm mod}\ 6), c(a-3) = 6d,{\text\ and \ }a \leq 2kd + 3,$

\item[(c.2)] there exist $\lambda,\mu \geq 1$ and $m \geq 1$ such that
$$a = (2+\lambda k)(m+1) + 1,\, c + (2+\lambda k)d = \mu a,$$
$${\text\ and \ } a+kd = \frac{cm - 3d}{\lambda}, $$ 

\item[(d)] there exist $\lambda \geq 2$, $\mu \geq 1$ and $v \geq 0$ such that
$$a = (v + 1)(2+(\lambda-2)k) + (\lambda k + 1),\, c + (2+(\lambda-2) k)d = \mu a,$$
$${\text\ and \ } a+kd = \frac{c(v+1) - d}{\lambda}, $$ 

\item[(e)] there exist $q_1 \geq 2$, $R_0 \equiv 2\, ({\rm mod}\, k)$ and $s_0 \equiv 2 \ ({\rm mod}\, k)$ such that
$$a = (q_1 - 1)s_0 + 1,\, a+kd = q_1 R_0 + 2k - 1\, {\text \ and \ } kc = (R_0 + 2k - 1)(s_0 - 1) + 2k - 1,$$

\item[(f)] there exist $q_1 \geq 2$, $R_0 \equiv 2\, ({\rm mod}\, k)$ and $s_0 \equiv 2 \ ({\rm mod}\, k)$, $s_0 \neq 2$ such that
$$a = (q_1 - 1)s_0 + 3,\, a+kd = q_1 R_0 + 1\, {\text \ and \ } kc = (R_0 + 1)(s_0 - 3) + 3.$$

\end{itemize}
\end{theorem}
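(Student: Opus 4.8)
\textbf{Proof proposal for Theorem \ref{construccion}.}

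The plan is to show that Theorem \ref{construccion} is merely a bookkeeping reformulation of Theorem \ref{pseudo}, replacing the conditions stated in terms of the auxiliary integers $s_i, R_i, P_i$ by equivalent conditions stated directly in terms of $a,d,c,k$ and a small number of free parameters. Items (a) and (b) are literally identical in the two statements, so nothing has to be done there. The work is to match up Theorem \ref{pseudo}(c) with the pair (c.1)--(c.2), Theorem \ref{pseudo}(d) with (d), Theorem \ref{pseudo}(e) with (e), and Theorem \ref{pseudo}(f) with (f).

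First I would dispose of the two cases closest to the original. In Theorem \ref{pseudo}(e) we have $s_0-s_1=1$, so $a=(q_1-1)s_0+1$ with $q_1\geq 2$; since $v=0$, $R_1=q_1R_0-(a+kd)\leq 0$, and the proof of Theorem \ref{pseudo} already extracted $\lambda=a+kd-q_1R_0=2k-1$ and $kc=(R_0+2k-1)(s_0-1)+2k-1$. The only extra fact needed is that $R_0\equiv 2\pmod k$: this follows from the defining relation $aR_0=(a+kd)s_0-kc$ together with $s_0\equiv 2\pmod k$, reducing mod $k$ to get $R_0\equiv s_0\equiv 2$. Conversely, given integers $q_1,R_0,s_0$ satisfying the three displayed equations and congruences in (e), one reconstructs $a,d,c$, checks $\gcd(a,d)=1$ is forced (or must be assumed), sets $s_1=s_0-1$, verifies $R_1\leq 0<R_0$ so that $v=0$, and reads off that Theorem \ref{pseudo}(e) holds. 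The argument for (f) is word-for-word the same with $s_0-s_1=3$, $\lambda=1$, and $kc=(R_0+1)(s_0-3)+3$; here the side condition $s_0\neq 2$ is exactly the hypothesis $(x_0,y_0)\neq(2,0)$ carried through Lemma \ref{lemax0}.

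Next I would treat (c). Theorem \ref{pseudo}(c) requires $s_{v+1}=1$, $P_{v+1}=P_v+1$, $s_v\equiv 3\pmod k$ and $R_v=3$. By Remark \ref{remarkfacil}, $s_{v+1}=1$ is equivalent to $v+1=m$ and $\gcd(a,c)=1$, and then Lemma \ref{lemmaP} forces $q_{i+1}=2$ and $P_i=i+1$ for $-1\leq i\leq m$; in particular $P_{m-1}=m$, $P_m=m+1$. I would then split according to whether $v=0$ (equivalently $m=1$) or $v\geq 1$. When $v=0$ we have $s_0\equiv 3\pmod k$, $s_1=1$, $a=q_1s_0-1$; the relation $aR_0=(a+kd)s_0-kc$ with $R_0=3$ gives $c(a-3)=6d$ after substituting $s_0=(a+1)/q_1$... more directly, $3a=(a+kd)s_0-kc$ combined with $R_1=q_1\cdot 3-(a+kd)\leq 0$ yields, after elimination, the relations of (c.1); the congruence $a\equiv 1$ or $5\pmod 6$ comes from $\gcd(a,d)=\gcd(a,c)=1$ together with $c(a-3)=6d$ (if $2\mid a$ or $3\mid a$ then $6\mid c(a-3)$ forces a common factor), and $a\leq 2kd+3$ encodes $R_1\leq 0$, i.e. $v=0$. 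When $v\geq 1$, I use $P_v=v+1$, $P_{v+1}=v+2$, write $a=(2+\lambda k)\cdot(\text{something})+1$ by unravelling $s_v\equiv 3\pmod k$, and translate $R_v=3$ and $R_{v+1}\leq 0<R_v$ into the three equations of (c.2) with $\lambda\geq 1$ coming from $s_v=2+\lambda k$ (wait: $s_v\equiv 3$, so $s_v=3+\lambda k$ for some $\lambda\geq 0$; I would recheck the exact indexing against equation (\ref{caso1}) in the proof of Theorem \ref{pseudo}, which already gives $P_vc=\frac{s_v-3}{k}a+ds_v$). The parameter $\mu$ is defined by $c+s_vd=\mu a$, which must be an integer by (\ref{Ps}). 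Case (d) is handled in exactly the same spirit starting from Theorem \ref{pseudo}(d): there $s_{v+1}=2k-1$, $P_{v+1}=P_v+1$, $s_v\equiv 1\pmod k$, $R_v=1$; again $P_v=v+1$ by Lemma \ref{lemmaP}, write $s_v=1+\lambda k$ with $\lambda\geq 1$ (forcing $a=(v+1)(2+(\lambda-2)k)+(\lambda k+1)$ after substituting into $a=q_1s_0-s_1$ telescoped down to level $v$), and turn $R_v=1$, $R_{v+1}\leq 0$, and the integrality of $(c+s_vd)/a=\mu$ into the displayed system, with $\lambda\geq 2$ arising because $s_{v+1}=2k-1<s_v=\lambda k+1$ needs $\lambda\geq 2$.

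The main obstacle I anticipate is purely combinatorial bookkeeping rather than any conceptual difficulty: getting the telescoping of the Euclidean recursions right when $q_{i+1}=2$ throughout (so that $s_{i}$ is linear in $i$ once two consecutive values are known, and $s_{-1}=a$ pins down the coefficients), and being careful about the off-by-one indexing between $P_v$, $P_{v+1}$, $s_v$, $s_{v+1}$ and the parameters $v,\lambda,\mu,m$ so that the equations in (c.1)--(f) come out exactly as written. For each converse direction I must also verify that the prescribed data genuinely produces a valid AA-semigroup --- i.e. that the recovered $d$ and $c$ are positive integers, that $\gcd(a,d)=1$ (this is an explicit hypothesis, so it suffices to check consistency), and that the inequalities $R_{v+1}\leq 0<R_v$ and $0\leq s_{v+1}<s_v$ hold so that the index $v$ of Theorem \ref{pseudo} really is the one we think it is --- after which the sufficiency half of Theorem \ref{pseudo} finishes the argument. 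None of these checks is deep, but there are enough of them that the proof is best organised as a case-by-case table mirroring the labels (a)--(f).
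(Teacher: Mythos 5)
Your plan is essentially the paper's own proof: the paper likewise treats Theorem \ref{construccion} as a case-by-case translation of Theorem \ref{pseudo}, using Remark \ref{remarkfacil} and Lemma \ref{lemmaP} to force $q_i=2$ and $P_i=i+1$, then telescoping the Euclidean recursion to express $a$, $c+s_vd$ and $R_v$ in terms of the parameters $\lambda,\mu,m$ (resp.\ $v$, resp.\ $q_1,R_0,s_0$). Two bookkeeping points in your sketch would need repair, though. First, the dichotomy between (c.1) and (c.2) is \emph{not} $v=0$ versus $v\geq 1$: it is $s_v=3$ versus $s_v=3+\lambda k$ with $\lambda\geq 1$. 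In case (c.1) the paper gets $s_i=a-2(i+1)$, $m=(a-3)/2$ and $v=m-1$, which is typically large; conversely (c.2) with $m=1$ has $v=0$ but $s_v>3$. You half-flag this yourself (``I would recheck the exact indexing''), but as written the split would misclassify semigroups; the inequality $a\leq 2kd+3$ in (c.1) encodes $R_{v+1}\leq 0$ only in the degenerate $s_v=3$ branch, not $v=0$. Second, in case (e) your derivation of $R_0\equiv 2\pmod k$ by reducing $aR_0=(a+kd)s_0-kc$ modulo $k$ only yields $a(R_0-s_0)\equiv 0\pmod k$, which needs $\gcd(a,k)=1$; the paper instead reduces the already-derived identity $kc=(R_0+2k-1)(s_0-1)+2k-1$ modulo $k$, which gives $(R_0-1)(s_0-1)\equiv 1$ and hence $R_0\equiv 2$ unconditionally. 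Both fixes are routine and the overall architecture is sound.
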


\begin{proof}Conditions (a) and (b) are equal to Theorem \ref{pseudo}(a) and (b), respectively.
We first start proving that the condition in Theorem \ref{pseudo} (c) is equivalent to either (c.1) or (c.2).
In Theorem \ref{pseudo} (c) we have $R_v = 3$ or, equivalently, $3a = (a+kd)s_v - kcP_v$, $s_{v+1} = 1$ and $P_{v+1} = P_v + 1$, so from the inequality $R_{v+1} \leq 0$ we obtain that
\begin{equation}\label{in1} (a+kd)s_{v+1} - kcP_{v+1} = a+kd
-kc -kcP_v = a - 2kd - kc - (s_v-3)(a+kd) \leq 0.\end{equation} Since $s_v > s_{v+1} = 1$ and $s_v \equiv 3 \ ({\rm mod}\ k)$, then $(\ref{in1})$ holds if and only if $s_v > 3$, 
or $s_v = 3$ and $a - 2kd - kc \leq 0$. 

Assume first that $s_v = 3$, then $a = s_v P_{v+1} - s_{v+1} P_v = 2 P_v + 3$. Moreover, $R_v = 3$ implies that $3d = cP_v$, 
so we get that $6d = c(a-3)$. We claim that $a$ is not multiple of $3$. Otherwise, since $a$ is odd, then $a-3$ is multiple of $6$ and we get 
that $c$ divides $d$ implying that $\mS = \langle a, c\rangle$, which is symmetric, a contradiction. 
Inequality (\ref{in1}) and the equality $6d = c(a-3)$ imply now that $a \leq 2kd + 3$, getting (c.1).

When $s_v > 3$, then $s_v = 3+\lambda k$ for some $\lambda \geq 1$. By Remark \ref{remarkfacil}, we get that $v = m-1$ and that $P_{m} = P_{m-1} + 1$.
By Lemma \ref{lemmaP} we get that $q_i = 2$ for all $i$ and, since, $s_m = 1$, $s_{m-1} = 3 + \lambda k$, we get that $s_{m-i} = 1 + i(2+\lambda k)$ for all
$i$. In particular, $a = s_{-1} = (2 + \lambda k) (m+1) + 1$ and $s_0= a - (2 + \lambda k)$. From the condition $d s_0 \equiv c \, ({\rm mod}\, a)$ 
we deduce that there exists $\mu \geq 1$ such that $\mu a = c + d(2+\lambda k)$. Finally, $R_{m-1} = 3$ implies 
that $a+kd = (cm - 3d)/\lambda$, getting (c.2).
\smallskip

Conversely, if (c.1) holds, then the equality $6d = c(a-3)$ yields that $6d \equiv -3c \ ({\rm mod}\ a)$ and $a$ is not a multiple of $3$,
so $d (a-2) \equiv c \ ({\rm mod}\, a)$ and we get that $s_0 = a-2$. From here we easily derive that $q_i = 2$, $s_i = a - 2(i+1)$ and $P_i = i+1$ for all $i$ and
$m = (a-3)/2$. From these values we easily get that $R_{m-1} = 3$ and by Remark \ref{remarkfacil} we obtain that we are under the conditions of Theorem \ref{pseudo} (c).
If (c.2) holds, then $s_0$ is the only integer in $0 \leq s_0 \leq a-1$ such that $d s_0 \equiv c \ ({\rm mod}\ a)$, which is $s_0 = a - (2+\lambda k) = (2+\lambda k)m+1$. Thus, the equalities $s_i = q_{i+2} s_{i+1} - s_{i+2}$ with $0 \leq s_{i+2} < s_{i+1}$ for all $i \leq m-1$ yield that $s_i = (2 + \lambda k)(m-i) + 1$ for all $i \in \{-1,\ldots,m\}$  and $q_i = 2$ for all
$i$. Hence, $P_{m+1} = m + 2$ and $s_{m-1} = 3 + \lambda k$. We also easily get that $R_{m-1} = 3$, so by Remark \ref{remarkfacil} we also recover the conditions of Theorem \ref{pseudo} (c).
\smallskip

The proof that (d) is equivalent to Theorem \ref{pseudo} (d) is analogue (but easier) to the previous one, with the only differences
that the role of $m$ is played by $v+1$ and that $R_v = 1$ directly implies that $R_{v+1} \leq 0$, so we do not need to impose the condition $R_{v+1} \leq 0$.
\smallskip

Let us prove that conditions in Theorem \ref{pseudo} (e) are equivalent to (e). Firstly, the fact that $s_1 = s_0 - 1$ implies that $a = (q_1 - 1) s_0 + 1$.
We observe that $P_1 = q_1$ and the condition $R_1 = 1-2k$ implies that $R_1 = q_1 R_0 - R_{-1} = q_1 R_0 - (a+kd) = 1-2k$.
Finally the equality $a R_0 = (a+kd)s_0 - kc$ yields that $kc = (R_0 + 2k - 1)(s_0 - 1) + 2k - 1$. If we take residues modulo $k$ in this equality we
conclude that $R_0 \equiv 2 \ ({\rm mod}\ k)$. The converse is straightforward. 
\smallskip

The proof that (f) is equivalent to Theorem \ref{pseudo} (f) is analogue to one above.
\end{proof}

\bibliographystyle{plain}

\end{document}